\documentclass{amsart}

\usepackage{amssymb}
\usepackage[all]{xy}

\newcommand{\fr}[3]{\ensuremath{{#1}_{#2}\langle{#3}\rangle}}
\newcommand{\ff}[3]{\ensuremath{{#1}_{#2}(\langle{#3}\rangle)}}

\DeclareMathOperator{\Aut}{Aut}

\DeclareMathOperator{\im}{im}
\DeclareMathOperator{\op}{op}
\DeclareMathOperator{\supp}{supp}
\DeclareMathOperator{\charac}{char}

\theoremstyle{plain}
\newtheorem{theorem}{Theorem}[section]
\newtheorem{proposition}[theorem]{Proposition}
\newtheorem{lemma}[theorem]{Lemma}
\newtheorem{corollary}[theorem]{Corollary}

\begin{document}

\title{Free fields in skew fields}

\author[V. O. Ferreira]{Vitor O. Ferreira}
\address{Department of Mathematics - IME, University of S\~ao Paulo,
Caixa Postal 66281, S\~ao Paulo, SP, 05314-970, Brazil}
\email{vofer@ime.usp.br}
\thanks{The first author is the corresponding author and was partially supported by CNPq, Brazil (Grant
302211/2004-7).}

\author[E. Z. Fornaroli]{\'Erica Z. Fornaroli}
\address{Department of Mathematics, State University of Maring\'a,
Avenida Colombo, 5790, Maring\'a, PR, 87020-900, Brazil}%
\email{ezancanella@uem.br}
\thanks{The second author was supported by CNPq, Brazil (Grant 141505/2005-2)}

\subjclass[2000]{16K40, 16W60, 16S10}

\keywords{Division rings, free fields, valuations}

\date{27 May 2008}

\begin{abstract}
  Building on the work of K.~Chiba (\textit{J.~Algebra}~\textbf{263} (2003), 75--87), we
  present sufficient conditions for the completion of a division ring
  with respect to the metric defined by a discrete valuation function to
  contain a free field, \emph{i.e.} the universal field of fractions of a free associative
  algebra. Several applications to division rings generated by
  torsion-free nilpotent groups, skew Laurent series and related division
  rings are discussed.
\end{abstract}

\maketitle


\section*{Introduction}

In this paper we present a slight improvement on a theorem
of K.~Chiba~\cite{kC03} on the existence of free fields in division rings
which are complete with respect to a valuation function. We then
move on to discuss large classes of examples of division rings
that do and do not contain free fields.

In what follows the terms ``division ring'' and ``skew field'' will
be used interchangeably. Occasionally, we shall omit the adjective
``skew''. A skew field which is commutative will always be called
a ``commutative field''.

Let us recall from \cite[Chapter 7]{pC85} that given a ring $R$, by
an \emph{$R$-ring} one understands a ring $L$ with a homomorphism
$R\rightarrow L$. For fixed $R$, the $R$-rings form a category in
which the maps are the ring-homomorphisms $L\rightarrow L'$ such
that the triangle below is commutative.
 $$
    \xymatrix{
      & R\ar[dl] \ar[dr] & \\
     L\ar[rr] & & L'}
 $$
An $R$-ring which is a skew field is called an \emph{$R$-field}. An
$R$-field $K$ is called an \emph{epic} $R$-field if $K$ is generated
as field by the image of $R$. If $K$ is an epic $R$-field for which
the map $R\rightarrow K$ is injective, $K$ is called a \emph{field
of fractions} of $R$.

By a \emph{local homomorphism} between $R$-fields $K$, $L$ one
understands an $R$-ring homomorphism $\alpha\colon K_0\rightarrow L$
from a subring $K_0$ of $K$ containing the image of $R$ into $L$
such that $K_0\setminus\ker\alpha \subseteq U(K_0)$, where $U(K_0)$
denotes group of units of $K_0$. A \emph{specialization} from $K$ to
$L$ is an equivalence class of local homomorphisms from $K$ into
$L$, where two local homomorphisms $\alpha$ and $\beta$ with domains
$K_0$ and $K_1$, respectively, are equivalent if there exists a
subring $K_2$ containing the image of $R$ such that $K_2 \subseteq
K_0\cap K_1$, $\alpha(x)=\beta(x)$, for all $x\in K_2$, and
$\alpha|_{K_2}=\beta|_{K_2}\colon K_2\rightarrow L$ is a local
homomorphism.

A \emph{universal} $R$-field is an epic $R$-field $U$ such that for
any epic $R$-field $K$ there exists a unique specialization from $U$
to $K$. Such a universal $R$-field, if it exists, is unique up to
isomorphism. If $R$ has a universal $R$-field $U$ then $R$ has a
field of fractions if and only if $U$ is its field of fractions. In
that case $U$ is called the \emph{universal field of fractions} of
$R$.

Given a skew field $D$, a subfield $K$ of $D$ and a set $X$, we
define the \emph{free $D_K$-ring} on $X$ to be the ring
$\fr{D}{K}{X}$ generated by $X$ over $D$ satisfying the relations
$ax=xa$ for all $a\in K$ and $x\in X$. It is well known that
$\fr{D}{K}{X}$ is a fir and, thus, has a universal field of
fractions $\ff{D}{K}{X}$, called the \emph{free $D_K$-field} on $X$.
(We refer to \cite{pC85} for the theory of firs and their universal
fields of fractions.) The free $D_D$-ring on $X$ will be called the
free $D$-ring on $X$ and will be denoted by $\fr{D}{}{X}$. Its
universal field of fractions, the free $D$-field, will be denoted by
$\ff{D}{}{X}$. (When $D$ is commutative, the free $D$-ring on $X$ is
just the free $D$-algebra on $X$.) When we refer to a free field, we
shall mean a $D_K$-free field on some nonempty set $X$ for some skew
field $D$ and subfield $K$.
%
%
%
%

We shall be looking at valuation functions on skew fields. We follow the
notation and definitions of \cite{kC03}. Let $R$ be a ring and
let $G$ be a (not necessarily abelian) ordered group with operation denoted additively.
By a \emph{valuation} on $R$ with values in $G$ one understands a map
$\nu\colon R\rightarrow G\cup\{\infty\}$ satisfying
\renewcommand{\theenumi}{\roman{enumi}}
\begin{enumerate}
  \item $\nu (xy)=\nu(x)+ \nu(y)$, for all $x, y \in R$,\label{val1}
  \item $\nu(x+y)\geq \min\{\nu(x), \ \nu(y)\}$, for all $x, y \in R$,
  \item $\nu(1)=0$ and $\nu(0)=\infty$,
\end{enumerate}
where it is understood that $a<\infty$ and $a+\infty=
\infty+a=\infty+\infty=\infty$, for all $a\in G$.
A valuation $\nu$ on $R$ will be called \emph{proper} if
the ideal $\nu^{-1}(\infty)$ is trivial; for example, on a field every
valuation is proper. All valuations considered in this work will be
proper.

Let $D$ be a skew field with a valuation $\nu\colon D\rightarrow
G\cup\{\infty\}$ and let $D^{\times}$ denote the set of all nonzero elements of $D$.
Then $\nu(D^{\times})$ is a subgroup of
$G$ called the \emph{value group} of $\nu$. If the ordered group
$\nu(D^{\times})$ is isomorphic to the additive group of integers,
then $\nu$ is called \emph{discrete}. When $\nu(D^{\times})$ is a ordered
subgroup of the additive group of real numbers, for instance, when $\nu$ is
discrete, we can define a metric $d$ on $D$ by choosing a real constant $c\in (0, 1)$
and letting
$$
d(x,y)=c^{\nu(x-y)},\quad\text{for all $x,y\in D$}.
$$
The topology so defined on $D$ is independent of the choice of the constant $c$ and the
completion $\widehat{D}$ of the metric space $D$ is again a skew field with a valuation
$\hat{\nu}$ such that $\widehat{D}$ and $\hat{\nu}$ are extensions of $D$ and $\nu$,
respectively.
We shall refer to $\widehat{D}$ as the completion of
$D$ with respect to the metric induced by $\nu$ or simply as the
completion of $D$ with respect to the valuation $\nu$.

An example is the following. Let $R$ be a ring with a central
regular element $t$ such that $\bigcap{t^nR}=0$ and $R/tR$ is a
domain. If we define, for each $x\in R$, $\nu(x)=\sup\{n : x\in
t^nR\}$ then $\nu$ is a valuation on $R$, called the \emph{$t$-adic
valuation}. In particular, if $R$ is a domain and if $R[z]$ is the
polynomial ring over $R$ on the indeterminate $z$, $R[z]$ has a
$z$-adic valuation. If, moreover, $R$ is a division ring, then
$R[z]$ is an Ore domain with field of fractions $R(z)$, the field of
rational functions on $z$. The $z$-adic valuation on $R[z]$ extends
to a discrete valuation $\nu$ on $R(z)$ and the completion of $R(z)$
with respect to $\nu$ is just the field of Laurent series $R((z))$.
We shall consider more general situations of this instance.

\medskip

Section~\ref{sec:gendr} of the present article is devoted to showing that Chiba's result on the
existence of free fields in completions of valued division rings which
are infinite dimensional over their centres holds for arbitrary
division rings with infinite centres. Subsequently, we propose
a method for guaranteeing the existence of free fields in
division rings which have a specialization into division rings
with free fields.

Section~\ref{sec:noff} elaborates on a commentary of
A.~Lichtman on division rings which do not contain free fields,
although their are known to contain free algebras.

The main results of the paper are in the last three sections.
Section~\ref{sec:mn} treats the case of division rings of
skew Malcev-Neumman series defined on a torsion-free nilpotent
group $G$ over a division ring $K$. They are shown to contain the
completion of the division ring of fractions of the group
ring of $G$ over $K$ with respect to a very natural valuation.
Free fields are shown to exist in both large division rings.

In Section~\ref{sec:ls} we look into division rings of skew Laurent
series over a division ring. Being complete with respect to natural
valuations (either ``$t$-adic'' or ``at infinity'', depending on
the presence of derivations), they are subjected to the criteria
of Section~\ref{sec:gendr}. Special instances considered are
appropriate completions of the Weyl field and of the division
ring generated by $2\times 2$ quantum matrices.

Finally, in Section~\ref{sec:adicval}, more general $t$-adic
valuations are considered. In particular, we obtain a
completion of the field of fractions of the universal
enveloping algebra of an arbitrary Lie algebra in
characteristic zero containing a free field.

\renewcommand{\theenumi}{\arabic{enumi}}

\section{Free fields in valued division rings}\label{sec:gendr}

\noindent





The proof of the following result can be extracted from the
proof of \cite[Theorem~1]{kC03}.

\begin{theorem}\label{th:chiba}
  Let $D$ be a skew field with infinite centre and let $K$ be a subfield of
  $D$ which is its own bicentralizer and whose centralizer $K'$ in $D$ is
  such that the left $K$-space $KcK'$ is infinite-dimensional,
  for all $c\in D^{\times}$. Suppose that $\nu$ is a discrete valuation
  on $D$ such that there exists a nonzero element $t$ of $K'$ with $\nu(t)>0$.
  Then for every countable set $\Sigma$ of full matrices over the
  free $D_K$-ring $\fr{D}{K}{X}$ there exists a
  $\Sigma$-inverting homomorphism from $\fr{D}{K}{X}$ into
  the completion $\widehat{D}$ of $D$ with respect to the valuation $\nu$.\hfill\qed
\end{theorem}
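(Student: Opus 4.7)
The plan is to construct a $\Sigma$-inverting homomorphism $\varphi\colon\fr{D}{K}{X}\to\widehat{D}$ by sending each indeterminate $x\in X$ to a $\nu$-adically convergent power series $\varphi(x)=\sum_{j\geq 0}\xi_{x,j}t^j$ with coefficients $\xi_{x,j}\in K'$. Since $t\in K'$ centralises $K$, every such series commutes in $\widehat{D}$ with the elements of $K$, so the prescription respects the defining relations $ax=xa$ ($a\in K$) of the free $D_K$-ring; the metric completeness of $\widehat{D}$ guarantees convergence. Enumerating $\Sigma=\{A_1,A_2,\ldots\}$, I would construct the coefficients in stages indexed by $n$, arranging that after stage $n$ the partially determined evaluation already renders $A_1,\dots,A_n$ invertible over $\widehat{D}$, and that any subsequent modification of the yet-undetermined $\xi_{x,j}$ (chosen to lie in a sufficiently high power of $t$) perturbs the evaluations of $A_1,\dots,A_n$ by elements of arbitrarily large valuation, hence preserves invertibility by continuity of matrix inversion in the valuation topology.

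The core step is therefore the following lemma: given a full matrix $A$ over $\fr{D}{K}{X}$, a prescribed choice of high-order coefficients, and any required lower bound on the valuation of the remaining contributions, one can select the finitely many unspecified low-order coefficients $\xi_{x,j}\in K'$ so that the evaluation of $A$ at $\varphi$ is invertible in $\widehat{D}$. I would treat these unknown $\xi_{x,j}$ as parameters and view the evaluation $A(\varphi(x_1),\dots,\varphi(x_m))$ as a matrix-valued function on a parameter space built from $K'$. The task is to translate fullness of $A$ into non-singularity of this parameter-dependent matrix for some choice of parameters, and here every hypothesis of the theorem enters: the assumption that $K$ is its own bicentraliser, together with an Amitsur--Bergman-style noncommutative-polynomial-identity argument, prevents the evaluated matrix from being identically singular as a function of the parameters; the assumption that $KcK'$ is infinite-dimensional over $K$ for all $c\in D^{\times}$ supplies a sufficiently rich family of independent ``test'' elements of $K'$ to realise this non-singularity; and the infinite centre of $D$ provides one final scalar parameter that can be specialised outside a finite set of bad values arising in the argument.

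The main obstacle is precisely this translation of fullness of $A$ into a non-vanishing assertion about a matrix with entries built from noncommutative parameters in $K'$: one must rule out the possibility that every admissible choice of parameters makes the evaluation singular, which is a matrix-valued analogue of showing that a nonzero noncommutative polynomial expression over $D$ cannot vanish identically when the variables range through $K'$. Once this lemma is established under the stated hypotheses, the inductive construction is routine: at stage $n$ apply the lemma to $A_n$ with all previously chosen coefficients and all higher-order coefficients fixed appropriately; the resulting sequence of partial specifications converges in $\widehat{D}$, and the limit homomorphism $\varphi$ inverts every $A_n$ simultaneously, as required.
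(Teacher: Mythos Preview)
The paper does not actually supply a proof of this theorem: it simply records that the argument ``can be extracted from the proof of \cite[Theorem~1]{kC03}'' and marks the statement with \qed. Your sketch is a faithful high-level summary of Chiba's strategy --- specialising each indeterminate to a $t$-adic series with coefficients in $K'$, enumerating $\Sigma$ and choosing the coefficients inductively so that each $A_n$ becomes invertible while later perturbations are of high valuation --- so you are proposing essentially the same approach the paper invokes by reference. The one caveat is that the ``core lemma'' you isolate (turning fullness of $A$ into non-singularity for some choice of parameters in $K'$, via a polynomial-identity argument exploiting the bicentraliser and $KcK'$ hypotheses) is precisely where all the technical work in Chiba's paper lies; your outline correctly identifies the ingredients but does not carry out that step, which is consistent with the paper's own decision to cite rather than reproduce it.
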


(In particular, it follows that when $D$ and $X$ are countable and
$\Sigma$ is the set of all full matrices over $\fr{D}{K}{X}$ the
completion $\widehat{D}$ contains the free field $\ff{D}{K}{X}$.
That is the statement of \cite[Theorem~1(1)]{kC03}.)

The following consequence of Theorem~\ref{th:chiba} should be compared
with \cite[Corollary~1(1)]{kC03}.

\begin{theorem}\label{th:freefield}
  Let $D$ be a skew field with infinite centre $C$ such that
  the dimension of $D$ over $C$ is infinite. If there exists a discrete
  valuation $\nu$ on $D$ then the completion $\widehat D$ of $D$
  with respect to the metric induced by $\nu$ contains a free
  field $\ff{C}{}{X}$ on a countable set $X$.
\end{theorem}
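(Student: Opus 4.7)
The plan is to invoke Theorem~\ref{th:chiba} directly, taking $K$ to be the centre $C$ of $D$ and $X$ a countable set. The three hypotheses of Chiba's theorem can be checked as follows.

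\emph{$C$ is its own bicentralizer.} Because $C = Z(D)$ is central in $D$, the centralizer $C'$ of $C$ in $D$ is all of $D$, and hence the bicentralizer is $D' = Z(D) = C$.

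\emph{Dimension.} The centralizer $K' = D$, and for any $c \in D^{\times}$ one has $CcD = cD$ (since $C$ is central). As multiplication by $c$ is a $C$-linear bijection, $\dim_C(cD) = \dim_C D$, which is infinite by hypothesis.

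\emph{Valued element in $K'$.} Since $\nu$ is discrete, $\nu(D^{\times}) \cong \mathbb{Z}$, so there exists $t \in D^{\times} = K'$ with $\nu(t) = 1 > 0$.

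Theorem~\ref{th:chiba} then supplies, for every countable set $\Sigma$ of full matrices over $\fr{D}{C}{X}$, a $\Sigma$-inverting $C$-ring homomorphism $\varphi\colon \fr{D}{C}{X} \to \widehat{D}$. Restricting $\varphi$ along the natural inclusion $\fr{C}{}{X} \hookrightarrow \fr{D}{C}{X}$ gives a $C$-ring homomorphism $\psi\colon \fr{C}{}{X} \to \widehat{D}$, and by the universal property of the free field, $\psi$ extends to an embedding $\ff{C}{}{X} \hookrightarrow \widehat D$ as soon as $\psi$ is honest, i.e.\ inverts every full matrix of $\fr{C}{}{X}$. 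Since $D$ is free (hence faithfully flat) as a left $C$-module, the extension $\fr{C}{}{X} \subseteq \fr{D}{C}{X}$ preserves fullness, so each full matrix of $\fr{C}{}{X}$ is an admissible element of $\Sigma$.

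The main step, and the principal obstacle, is to select a countable $\Sigma$ that forces $\psi$ to be honest. When $C$ is countable this is immediate: one takes $\Sigma$ to be the (countable) set of all full matrices of $\fr{C}{}{X}$. When $C$ is uncountable the family of full matrices of $\fr{C}{}{X}$ is itself uncountable and cannot be placed wholesale into $\Sigma$; the argument should overcome this by exploiting the fact that each full matrix involves only finitely many constants from $C$, together with the completeness of $\widehat D$ and the iterative, ``generic'' nature of the construction of $\varphi$ in Chiba's theorem, to arrange that inversion of a single well-chosen countable $\Sigma$ automatically forces inversion of every full matrix of $\fr{C}{}{X}$.
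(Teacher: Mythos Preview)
Your verification of the hypotheses of Theorem~\ref{th:chiba} with $K=C$ is fine, and the countable-$C$ case is correctly handled. The genuine gap is the uncountable case: what you write in the last paragraph is a hope, not an argument. Chiba's theorem produces, for each countable $\Sigma$, \emph{one} homomorphism $\varphi_\Sigma$; different $\Sigma$'s may yield different $\varphi$'s, so there is no evident way to glue them. Nor does the observation that each full matrix uses only finitely many constants help: the union of those finite sets over all full matrices of $\fr{C}{}{X}$ is still all of $C$, hence uncountable. The ``generic'' nature of Chiba's construction does not, on its own, force a single countable $\Sigma$ to control fullness over an uncountable $C$.

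The paper resolves this by a descent-and-ascent trick that avoids the obstacle entirely. One first chooses a \emph{countable} subfield $F\subseteq C$ and lets $\Sigma$ be the set of all full matrices over $\fr{F}{}{X}$; this $\Sigma$ is countable. The inclusion $\fr{F}{}{X}\hookrightarrow\fr{D}{C}{X}$ is honest (by \cite[Theorem~6.4.6]{pC95}), so $\Sigma$ consists of full matrices over $\fr{D}{C}{X}$ and Theorem~\ref{th:chiba} applies, yielding an embedding $\ff{F}{}{X}\hookrightarrow\widehat D$. One then climbs back up: since $F$ is central, $X$ freely generates a free field over the prime field of $D$, and \cite[Lemma~9]{kC03} promotes this to a free field $\ff{C}{}{X}$ inside $\widehat D$. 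The missing idea in your attempt is precisely this passage through a countable subfield together with the use of Chiba's Lemma~9.
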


\begin{proof}
  Let $X$ be a countable set and let $F$ be a countable subfield of $C$.
  Let $\Sigma$ denote the set of all full matrices over $\fr{F}{}{X}$.
  Since both $F$ and $X$ are countable, $\Sigma$ is countable. Moreover,
  $\Sigma$ is a set of full matrices over $\fr{D}{C}{X}$ because the natural
  inclusion $\fr{F}{}{X} \hookrightarrow \fr{D}{C}{X}$
  is an honest map, by \cite[Theorem~6.4.6]{pC95}. It follows
  from Theorem~\ref{th:chiba} that there exists a $\Sigma$-inverting
  homomorphism $\fr{D}{C}{X} \rightarrow \widehat{D}$ and, therefore,
  the composed map
  $$
    \fr{F}{}{X}\hookrightarrow\fr{D}{C}{X}\longrightarrow\widehat{D}
  $$
  is a $\Sigma$-inverting $F$-ring homomorphism which extends to an $F$-ring homomorphism
  $\ff{F}{}{X}\rightarrow\widehat{D}$. Thus $X$ freely generates a free
  subfield of $\widehat{D}$ over $F$. Since $F$ is a central subfield of $D$,
  $X$ freely generates a free subfield of $\widehat{D}$ over the prime field of $D$ and,
  \textit{a fortiori}, over $C$, by \cite[Lemma 9]{kC03}.
\end{proof}

This provides a more direct proof the following version of \cite[Corollary~1(2)]{kC03}.

\begin{corollary}
  Let $D$ be a skew field with centre $C$ such that the
  dimension of $D$ over $C$ is infinite. Then the skew field of
  Laurent series $D(( z))$ in $z$ over $D$ contains a free field
  $\ff{C}{}{X}$ on a countable set $X$.
\end{corollary}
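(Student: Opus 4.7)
My plan is to apply Theorem~\ref{th:freefield} to the skew field $D(z)$ of rational functions over $D$ in a central indeterminate $z$, whose completion with respect to the (discrete) $z$-adic valuation is $D((z))$, as recalled in the introduction.

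To invoke Theorem~\ref{th:freefield}, I must check that $D(z)$ has infinite centre and infinite dimension over it. Since $z$ is central, the centre of $D(z)$ is $C(z)$, which is infinite. For the dimension condition, any $C$-linearly independent family $\{d_i\}$ in $D$ remains $C(z)$-linearly independent in $D(z)$: clearing denominators in a would-be dependence relation produces $\sum d_i p_i(z)=0$ in $D[z]$ with $p_i\in C[z]$, and equating coefficients of each power of $z$ reduces to $C$-linear independence in $D$. Hence $[D(z):C(z)]\geq[D:C]=\infty$, and Theorem~\ref{th:freefield} produces an embedding $\ff{C(z)}{}{X}\hookrightarrow D((z))$ for a countable set $X$.

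It remains to descend from a free field over $C(z)$ to one over the smaller central subfield $C$. For this I would use that the natural map $\fr{C}{}{X}\hookrightarrow\fr{C(z)}{}{X}$ is honest by \cite[Theorem~6.4.6]{pC95}, and hence extends to an embedding $\ff{C}{}{X}\hookrightarrow\ff{C(z)}{}{X}$ of the respective universal fields of fractions; composing with the embedding supplied by Theorem~\ref{th:freefield} yields the desired $\ff{C}{}{X}\hookrightarrow D((z))$. The only delicate point along the way is the identification $Z(D(z))=C(z)$, a standard fact for a central indeterminate over a skew field; once it is acknowledged, the corollary is a mechanical combination of Theorem~\ref{th:freefield} with the honest-embedding property of free algebras over central subfields.
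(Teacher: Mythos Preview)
Your proof is correct and follows essentially the same route as the paper's: verify that $D(z)$ has infinite centre $C(z)$ and infinite dimension over it, apply Theorem~\ref{th:freefield} to obtain $\ff{C(z)}{}{X}\subseteq D((z))$, and then descend to $\ff{C}{}{X}$. The paper justifies $Z(D(z))=C(z)$ by citing \cite[Proposition~2.1.5]{pC95} and the dimension bound via the embedding $D\otimes_C C(z)\hookrightarrow D(z)$; your direct linear-independence argument is an equivalent substitute. For the descent step the paper invokes \cite[Proposition~5.4.4]{pC95} directly, whereas you argue via honesty of $\fr{C}{}{X}\hookrightarrow\fr{C(z)}{}{X}$; both routes yield the embedding $\ff{C}{}{X}\hookrightarrow\ff{C(z)}{}{X}$, so the difference is cosmetic.
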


\begin{proof}
  Let $\omega$ denote the valuation on $D(z)$ which extends the $z$-adic
  valuation on $D[z]$. Since there is a natural embedding
  $D\otimes_C C(z)\hookrightarrow D(z)$, it follows that $[D(z):C(z)]\geq
  [D\otimes_CC(z):C(z)]=[D:C]$ and, hence, $D(z)$ is infinite dimensional over $C(z)$, which is
  its (infinite) centre, by \cite[Proposition~2.1.5]{pC95}. So Theorem~\ref{th:freefield}
  applies and, therefore, $D(( z))$ contains a free field $\ff{C(z)}{}{X}$
  on a countable set $X$. Finally, by \cite[Proposition~5.4.4]{pC95}, $D(( z))$
  contains a free field $\ff{C}{}{X}$.
\end{proof}



We finish this section by remarking that free fields can be ``pulled
back'' through specializations.

\begin{theorem}\label{th:spec}
  Let $D$ be a skew field with a subfield $K$ and let $E$ be a $D$-field
  such that $E$ contains a free field $\ff{D}{K}{X}$. Let $F$ be a $D$-field
  and suppose that there exists a specialization from $F$ to $E$ satisfying
  the following condition on a local homomorphism $\alpha$
  representing it,
  $$
    \alpha^{-1}(x)\cap C_F(K) \ne \emptyset \text{, for every $x\in X$,}
  $$
  where $C_F(K)$ denotes the centralizer of $K$ in $F$.
  Then $F$ contains a free field $\ff{D}{K}{X}$.
\end{theorem}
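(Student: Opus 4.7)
The plan is to construct a $D$-ring homomorphism $\phi\colon\fr{D}{K}{X}\to F$ that inverts every full matrix over $\fr{D}{K}{X}$, so that the universal property of the universal field of fractions yields an embedding $\ff{D}{K}{X}\hookrightarrow F$. Fix a local homomorphism $\alpha\colon F_0\to E$ representing the given specialization, where $F_0$ is a subring of $F$ containing the image of $D$ and $F_0\setminus\ker\alpha\subseteq U(F_0)$. For each $x\in X$, use the hypothesis to choose $y_x\in F_0\cap C_F(K)$ with $\alpha(y_x)=x$. Since each $y_x$ commutes with $K$, there is a unique $D$-ring homomorphism $\phi\colon\fr{D}{K}{X}\to F$ sending $x\mapsto y_x$, and its image is contained in $F_0$. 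By construction, $\alpha\circ\phi$ agrees on $D$ and on $X$ with the canonical embedding $\fr{D}{K}{X}\hookrightarrow\ff{D}{K}{X}\hookrightarrow E$, so the two maps coincide.

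Since $E$ contains $\ff{D}{K}{X}$ as the universal field of fractions of $\fr{D}{K}{X}$, this canonical embedding inverts every full matrix; hence $\alpha(\phi(M))$ is invertible in $M_n(E)$ for every full matrix $M$ over $\fr{D}{K}{X}$. The crux of the proof is now to transfer this invertibility back to $F_0$: the condition $F_0\setminus\ker\alpha\subseteq U(F_0)$ forces $\ker\alpha$ to be the unique maximal two-sided ideal of $F_0$, whence $\ker\alpha=J(F_0)$ and therefore $J(M_n(F_0))=M_n(\ker\alpha)$. A standard Nakayama-type lifting then shows that any matrix over $F_0$ whose $\alpha$-image is invertible over $E$ is itself invertible over $F_0$. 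Applied to $\phi(M)$, this gives that $\phi(M)$ is invertible in $M_n(F_0)\subseteq M_n(F)$ for every full $M$.

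Since $\phi$ inverts every full matrix, the universal property of $\ff{D}{K}{X}$ produces a $D$-ring extension $\tilde\phi\colon\ff{D}{K}{X}\to F$; as $\ff{D}{K}{X}$ is a field and $\tilde\phi(1)=1$, this extension is injective, exhibiting $\ff{D}{K}{X}$ as a subfield of $F$. The main obstacle is the matrix-level lifting step: while the scalar version of ``invertibility reflects through a local homomorphism'' is built into the definition of local homomorphism, the passage to matrices is the technical fact that makes pulling a free field back through a specialization actually work at the level of full matrices, rather than only on generators. This step is standard within Cohn's formalism for universal localizations.
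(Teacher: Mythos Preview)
Your argument is correct and takes a genuinely different route from the paper's. Both proofs begin identically, choosing $y_x\in\alpha^{-1}(x)\cap C_F(K)$ and forming the $D$-ring map $\phi\colon\fr{D}{K}{X}\to F_0$, but then diverge. You work directly with matrices: since $F_0$ is a (noncommutative) local ring with residue skew field $\im\alpha\subseteq E$, invertibility of $\alpha(\phi(M))$ in $M_n(E)$---equivalently in $M_n(\im\alpha)$, as $\im\alpha$ is itself a skew field---lifts to invertibility of $\phi(M)$ in $M_n(F_0)$ via $J(M_n(F_0))=M_n(\ker\alpha)$, so $\phi$ inverts all full matrices and extends to $\ff{D}{K}{X}$. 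The paper instead isolates a general proposition: if $R$ has a universal field of fractions $U$ and there is a specialization from an $R$-field $F$ to an $R$-field $E\supseteq U$, then $U$ embeds in $F$. Its proof restricts $\alpha$ to $L_0=L\cap F_0$, where $L$ is the subfield of $F$ generated by $R$, obtaining a specialization $L\to U$; universality of $U$ then supplies a specialization $U\to L$ back, and Cohn's theorem on epic $R$-fields with mutual specializations forces $U\cong L$. Your approach is more self-contained, using only the $\Sigma$-inverting description of the universal field of fractions and an elementary lifting of units through the Jacobson radical; the paper's approach packages the content as a reusable categorical statement and trades the matrix computation for the initial-object characterization of universal $R$-fields in the category of epic $R$-fields and specializations.
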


The theorem above will be obtained as a consequence of the
following more general result.

\begin{proposition}
  Let $R$ be a ring with a universal field of fractions $U$. Let $F$ and
  $E$ be $R$-fields and suppose that $U$ is an $R$-subfield of $E$.
  If there exists a specialization from $F$ to $E$ then $U$
  is isomorphic to an $R$-subfield of $F$.
\end{proposition}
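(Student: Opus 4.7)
The plan is to exhibit $U$ as $R$-isomorphic to the epic $R$-subfield $F_0$ of $F$ (the subfield generated by the image of $R$), after which the inclusion $F_0 \subseteq F$ finishes the proof. The main tool is Cohn's classification of epic $R$-fields by their prime matrix ideals (see \cite[Chapter~7]{pC85}): two epic $R$-fields which admit specializations in each direction have equal prime matrix ideals and hence are $R$-isomorphic. Applying the universal property of $U$ to the epic $R$-field $F_0$ yields for free a specialization $U \to F_0$, so the task reduces to constructing a specialization $F_0 \to U$ from the hypothesised specialization $F \to E$. A preliminary observation is that, since $U$ is itself epic and contained in $E$, the $R$-subfield of $E$ generated by the image of $R$ is precisely $U$, so any local homomorphism representing the sought specialization should land in $U \subseteq E$.

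Concretely, I would pick a local homomorphism $\alpha \colon F' \to E$ representing the specialization $F \to E$, so that $F' \subseteq F$ contains the image of $R$ and $F' \setminus \ker \alpha \subseteq U(F')$. I would then form the subring
\[
  F''' = \alpha^{-1}(U) \cap F_0
\]
and look at the restriction $\alpha|_{F'''} \colon F''' \to U$. It is immediate that $F'''$ contains the image of $R$. The essential check is that $\alpha|_{F'''}$ is a local homomorphism: given $x \in F''' \setminus \ker \alpha$, the fact that $x \in U(F')$ gives $x^{-1} \in F'$, while $x$ being a nonzero element of the field $F_0$ gives $x^{-1} \in F_0$; hence $x^{-1} \in F' \cap F_0$, and because $\alpha(x^{-1}) = \alpha(x)^{-1} \in U$, we actually have $x^{-1} \in F'''$. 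Thus $\alpha|_{F'''}$ represents a specialization $F_0 \to U$.

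Combining the two specializations via Cohn's correspondence yields an $R$-isomorphism $U \cong F_0$, and since $F_0$ is an $R$-subfield of $F$ we are done. The main obstacle I anticipate is exactly the local-homomorphism verification above: one must take the subring $F'''$ as the intersection of $F_0$ with the \emph{preimage} $\alpha^{-1}(U)$, not merely as $F' \cap F_0$, in order to simultaneously guarantee that the restricted map takes values in $U$ and that non-kernel elements remain invertible inside the new subring. Both the fact that $F_0$ is a field (giving inverses of nonzero elements in $F_0$) and the fact that $\alpha$ is already a local homomorphism on $F'$ (giving inverses of non-kernel elements in $F'$) are used in an essential way.
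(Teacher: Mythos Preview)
Your proposal is correct and follows essentially the same route as the paper: restrict the given local homomorphism to the epic $R$-subfield of $F$ (your $F_0$, the paper's $L$), check that this yields a specialization to $U$, combine with the universal specialization $U\to F_0$, and conclude $U\cong F_0$ via Cohn's theorem. The only cosmetic difference is that you take the restricted domain to be $\alpha^{-1}(U)\cap F_0$ so that the image lands in $U$ by construction, whereas the paper takes $F'\cap F_0$ (in your notation) and simply asserts $\alpha(F'\cap F_0)\subseteq U$; the two subrings in fact coincide, and your formulation makes that containment self-evident.
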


\begin{proof}
  The canonical maps $R\rightarrow F$ and $R\rightarrow E$ are injective,
  because $E$ contains $U$, a field of fractions of $R$, and there exists an $R$-homomorphism
  $\alpha\colon F_0\rightarrow E$, where $F_0$ is an $R$-subring of $F$. In fact,
  there exists such an $\alpha$ with the property that any element not in $\ker\alpha$
  is invertible in $F_0$. Now, $\im\alpha\cong F_0/\ker\alpha$
  is an $R$-subfield of $E$ and, since $U$ is generated as a field by $R$, we have
  $U\subseteq \im\alpha$. Let $L$ be the subfield of $F$ generated by $R$ and set $L_0=L\cap F_0$.
  Clearly, $\alpha(L_0)\subseteq U$. So we have an $R$-map $\beta\colon L_0\rightarrow U$
  given by the restriction of $\alpha$ to $L_0$.
  $$
    \xymatrix{
      F_0\ar[rr]^{\alpha} & & \im\alpha\ar@{^{(}->}[r]& E\\
      L_0\ar@{^{(}->}[u] \ar[rr]^{\beta} & & U\ \ar@{^{(}->}[u]\\
      & R\ar@{_{(}->}[ul] \ar@{^{(}->}[ur]}
  $$
  By definition of $\beta$, any element
  of $L_0$ not in the kernel of $\beta$ is invertible in $L_0$. Therefore,
  there exists a specialization from the $R$-field $L$ to $U$. Since $U$ is a universal
  $R$-field, there exists a specialization from $U$
  to $L$ and, hence, $U\cong L\subseteq F$, by \cite[Theorem~7.2.4]{pC95}.
\end{proof}

\begin{proof}[Proof of Theorem~\ref{th:spec}]
  Let $F_0$ denote the domain of $\alpha$. For each $x\in X$ fix $y_x\in
  \alpha^{-1}(x)\cap C_F(K)$. Let
  $\varphi\colon D_K\langle X\rangle\rightarrow F$ be the
  unique $D$-ring homomorphism such that $\varphi(x)=y_x$. Then $F$
  is a $D_K\langle X\rangle$-field. Since $\ff{D}{K}{X}$ is the universal
  field of fractions of $D_K\langle X\rangle$, it follows from the
  previous proposition that $F\supseteq\ff{D}{K}{X}$.
\end{proof}

\section{Some skew fields that do not contain free fields}\label{sec:noff}

\noindent




In this section we shall present a theorem giving a necessary condition
for a division ring to contain a free field. This will then be used
in order to produce examples of division rings which, although containing
free algebras, do not contain free fields.

Throughout this section, $k$ will denote a commutative field.

Let $A$ be a $k$-algebra. Denote by $A^{\op}$
the opposite algebra of $A$. Regard the $(A,A)$-bimodule $A$ as a left module over the
enveloping algebra $A\otimes_k A^{\op}$ in the usual way, that is, via
$a\otimes b\cdot x = axb$, for all $a,b,x\in A$. We recall that the multiplication map
$$\begin{array}{rcl}
  \mu\colon A\otimes_k A^{\op}&\longrightarrow & A\\
  a\otimes b&\longmapsto &ab
\end{array}$$
is a surjective homomorphism of left $A\otimes_k A^{\op}$-modules
and that $\ker\mu$ is generated as a left $A$-module by the set
$\{a\otimes 1-1\otimes a : a\in A\}$.




\begin{lemma}\label{le:Lichtman}
  Let $D$ be a skew field which is a $k$-algebra. If $D\otimes_k D^{op}$
  is a left noetherian algebra, then $D$ does not contain
  an infinite strictly ascending chain of subfields $D_1\subsetneqq D_2 \subsetneqq D_3
  \subsetneqq \dots$ which are $k$-subalgebras.
\end{lemma}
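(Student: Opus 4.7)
The plan is to argue by contradiction. Suppose $D_1 \subsetneq D_2 \subsetneq \cdots$ is an infinite strictly ascending chain of skew subfields of $D$ that are $k$-subalgebras. For each $i$, let $J_i$ denote the left ideal of $D \otimes_k D^{\op}$ generated by $\{a \otimes 1 - 1 \otimes a : a \in D_i\}$. My goal is to show $J_1 \subsetneq J_2 \subsetneq \cdots$ is strictly ascending, which directly contradicts the left noetherian hypothesis.

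The inclusion $J_i \subseteq J_{i+1}$ is immediate from $D_i \subseteq D_{i+1}$. For strictness, I first identify the quotient: the calculation $(x \otimes y)(a \otimes 1 - 1 \otimes a) = xa \otimes y - x \otimes ay$ shows that imposing $a \otimes 1 = 1 \otimes a$ for all $a \in D_i$ yields exactly the defining relations of the relative tensor product $D \otimes_{D_i} D$, where the right $D_i$-action is used on the first factor and the left $D_i$-action on the second. Hence there is a canonical isomorphism $(D \otimes_k D^{\op})/J_i \cong D \otimes_{D_i} D$ of abelian groups (and even of $(D,D)$-bimodules).

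Because $D_i$ is a skew subfield, $D$ is free as a right $D_i$-module; I fix a basis $\{e_\lambda\}_{\lambda \in \Lambda}$ with $e_0 = 1$, so that $D \otimes_{D_i} D$ becomes a free right $D$-module with basis $\{e_\lambda \otimes 1\}_\lambda$. Given $b \in D_{i+1} \setminus D_i$ written uniquely as $b = \sum_\lambda e_\lambda d_\lambda$ with $d_\lambda \in D_i$ almost all zero, one has $b \otimes 1 = \sum_\lambda (e_\lambda \otimes 1) d_\lambda$, while $1 \otimes b = (e_0 \otimes 1)\, b$. Comparing coefficients in the free basis forces $d_0 = b$ and $d_\lambda = 0$ for $\lambda \ne 0$, i.e.\ $b \in D_i$, contrary to choice. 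Thus $b \otimes 1 - 1 \otimes b$ lies in $J_{i+1}$ but not in $J_i$, the chain of left ideals is strictly ascending, and the noetherian hypothesis is violated.

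I expect the main technical point to be the identification of $(D \otimes_k D^{\op})/J_i$ with the relative tensor product $D \otimes_{D_i} D$, combined with the freeness of $D$ as a right $D_i$-module (which is automatic since $D_i$ is a skew subfield). These two ingredients together guarantee that enlarging $D_i$ to $D_{i+1}$ genuinely enlarges $J_i$; without them, the naive candidate witnesses $b\otimes 1 - 1\otimes b$ could collapse in the quotient and the strict ascent would fail.
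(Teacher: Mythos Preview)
Your proof is correct and follows the same overall strategy as the paper: both build the chain of left ideals $J_i$ in $D\otimes_k D^{\op}$ generated by $\{a\otimes 1-1\otimes a:a\in D_i\}$ (in the paper's notation these are the $RI_i$) and obtain strictness by passing to a tensor product over $D_i$. The execution differs in one technical respect. The paper first shows $R_2I_1\subsetneq I_2$ inside $R_2=D_2\otimes_k D_2^{\op}$ via the map $R_2\to D_2\otimes_{D_1}D_2^{\op}$, and then lifts this to $RI_1\subsetneq RI_2$ using that $R$ is faithfully flat over $R_2$. You instead identify the full quotient $(D\otimes_k D^{\op})/J_i$ directly with $D\otimes_{D_i}D$ and exploit freeness of $D$ over $D_i$ to read off that $b\otimes 1-1\otimes b\notin J_i$ for $b\in D_{i+1}\setminus D_i$. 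Your route is a bit more economical, as it bypasses the intermediate rings $R_n$ and the faithful-flatness step; the paper's version, on the other hand, localises the computation to the small ring $D_2\otimes_{D_1}D_2^{\op}$, which some readers may find conceptually cleaner. Either way the substance is the same.
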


\begin{proof}
  Suppose that $D$ contains an infinite strictly ascending chain of subfields
  $$
    D_1\subsetneqq\ D_2\subsetneqq\ D_3\subsetneqq\dots
  $$
  which are $k$-subalgebras. We start by fixing some notation.
  Let $R=D\otimes_k D^{\op}$; for each $n\geq 1$, let $R_n=D_n\otimes_k D_n^{\op}$,
  let $\mu_n\colon R_n\rightarrow D_n$ denote the multiplication map in $D_n$
  and let $I_n=\ker\mu_n$. It is clear that $R$ contains the following
  chain of left ideals,
  $$
    RI_1\subseteq RI_2\subseteq RI_3\subseteq\dots
  $$
  We shall show that for every $n\geq 1$, we have $RI_n\neq RI_{n+1}$.
  For the sake of simplicity, we shall show that $RI_1\neq RI_2$.

  Since $I_1\subseteq I_2$ and $I_2$ is a left ideal of $R_2$, we
  have $R_2I_1\subseteq I_2$. We start by showing that $R_2I_1\neq I_2$.
  The ring $S=D_2\otimes_{D_1}D_2^{\op}$ has a left $R_2$-module structure
  such that $(a\otimes_k b)\cdot(x\otimes_{D_1}y) = ax\otimes_{D_1}yb$,
  for all $a,b,x,y\in D_2$. Consider the homomorphism of
  left $R_2$-modules $\varphi\colon R_2\rightarrow S$ such that
  $\varphi(a\otimes_k b) =a\otimes_{D_1} b$, for all $a,b\in D_2$.
  Given $z\in D_1$, we have $\varphi (1\otimes_k z-z\otimes_k 1) =
  1\otimes_{D_1}z-z\otimes_{D_1} 1 =0$. It follows that $R_2I_1\subseteq\ker\varphi$,
  for the set $\{1\otimes_k z-z\otimes_k 1:z\in D_1\}$ generates $I_1$ as a
  left $R_1$-module and, hence, is a generating set for the left ideal
  $R_2I_1$ of $R_2$. Suppose that $R_2I_1=I_2$ and take $x\in D_2\setminus D_1$. Then
  $1\otimes_k x-x\otimes_k 1\in I_2=R_2I_1\subseteq\ker\varphi$. This implies
  $$
    1\otimes_{D_1} x = \varphi(1\otimes_k x)=\varphi(x\otimes_k 1) =x\otimes_{D_1}1,
  $$
  which is a contradiction with the choice of $x$. So $R_2I_1\neq I_2$.

  To prove that $RI_1\subsetneqq RI_2$, note that since $D_2$ is
  a free $D_1$-module, $R_2$ is a free right $R_1$-module and, thus,
  it is a faithfully flat right $R_1$-module. Similarly, $R$ is a
  faithfully flat right $R_2$-module. We then have
  $$
    RI_1 = R\otimes_{R_2}(R_2\otimes_{R_1}I_1) =
    R\otimes_{R_2}R_2I_1\subsetneqq R\otimes_{R_2}I_2= RI_2,
  $$
  which is what we had to prove.
\end{proof}

As a consequence we obtain the following necessary condition
for a skew field to contain a free field.

\begin{theorem}\label{th:nofreefield}
  Let $D$ be a skew field and let $k$ be a central subfield of $D$.
  If $D$ contains a free field $\ff{k}{}{x,y}$ then $D\otimes_k D^{\op}$
  cannot be a left noetherian ring.
\end{theorem}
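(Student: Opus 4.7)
The plan is to prove the contrapositive via Lemma~\ref{le:Lichtman}: assuming that $\ff{k}{}{x,y}$ embeds in $D$, I will exhibit inside $D$ an infinite strictly ascending chain of subfields that are $k$-subalgebras, which by the lemma forces $D\otimes_k D^{\op}$ to fail left-noetherianness. Since every such chain in $\ff{k}{}{x,y}\subseteq D$ consists of $k$-subalgebras of $D$ (as $k$ is central and each subfield contains the image of $k$), it suffices to work entirely inside $\ff{k}{}{x,y}$.

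The key reduction is to find, inside $\ff{k}{}{x,y}$, a copy of the free field on a countable set. Concretely, I would choose a sequence of elements $z_1,z_2,\ldots \in \fr{k}{}{x,y}$ that freely generates a free $k$-subalgebra of $\fr{k}{}{x,y}$; a standard choice is $z_n=x^nyx$, or any analogous family of monomials whose freeness is clear from the structure of words in $x,y$. This gives an inclusion of free algebras
$$\fr{k}{}{z_1,z_2,\ldots}\hookrightarrow\fr{k}{}{x,y}.$$
Cohn's theory of firs (essentially the same circle of ideas invoked via \cite[Theorem~6.4.6]{pC95} in the proof of Theorem~\ref{th:freefield}) shows that this embedding is honest, so it extends to an embedding of universal fields of fractions
$$\ff{k}{}{z_1,z_2,\ldots}\hookrightarrow \ff{k}{}{x,y}.$$

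Now I set $D_n$ to be the $k$-subfield of $\ff{k}{}{x,y}$ generated by $z_1,\ldots,z_n$. By the universal property, $D_n\cong \ff{k}{}{z_1,\ldots,z_n}$, and because $z_{n+1}$ is freely independent from $z_1,\ldots,z_n$ inside $\ff{k}{}{z_1,z_2,\ldots}$, it lies in $D_{n+1}\setminus D_n$. Hence $D_1\subsetneq D_2\subsetneq \cdots$ is an infinite strictly ascending chain of subfields of $D$ that are $k$-subalgebras. Lemma~\ref{le:Lichtman} then yields the desired conclusion that $D\otimes_k D^{\op}$ is not left noetherian.

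The only nontrivial point in this plan is the existence of the honest embedding of free algebras extending to an embedding of free fields; everything else is routine. I would not attempt to reprove the honesty statement but simply cite it from Cohn's book, in the same spirit as the citation already used in Theorem~\ref{th:freefield}. An alternative, fully self-contained route would be to pick the $z_n$ directly and argue strictness of each inclusion $D_n\subsetneq D_{n+1}$ by a degree/normal-form argument in $\ff{k}{}{x,y}$; however, invoking the embedding of free fields is cleaner and aligns with the framework already set up in the paper.
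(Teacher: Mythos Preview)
Your proposal is correct and follows essentially the same route as the paper: embed a free field on countably many generators inside $\ff{k}{}{x,y}\subseteq D$, take the chain $D_n=\ff{k}{}{z_1,\dots,z_n}$, and invoke Lemma~\ref{le:Lichtman}. The only difference is cosmetic: the paper obtains the embedding $\ff{k}{}{x_1,x_2,\dots}\hookrightarrow\ff{k}{}{x,y}$ by directly citing \cite[Corollary~5.5.9]{pC95}, whereas you sketch the underlying construction via an explicit free family and an honesty argument. Note, however, that \cite[Theorem~6.4.6]{pC95} concerns base-field change for tensor rings and is not the right reference for the honesty of $\fr{k}{}{z_1,z_2,\dots}\hookrightarrow\fr{k}{}{x,y}$; the correct citation is precisely \cite[Corollary~5.5.9]{pC95} (or the inertia results leading up to it), so simply quoting that, as the paper does, is both cleaner and more accurate.
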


\begin{proof}
  If $D$ contains the free field $\ff{k}{}{x,y}$ then, by \cite[Corollary
  5.5.9]{pC95}, $D$ contains a free
  field with a countable number of generators, say,
  $\ff{k}{}{x_1,x_2,\dots}\subseteq D$. For every $n\geq 1$, letting
  $D_n=\ff{k}{}{x_1,\dots,x_n}$, we obtain an infinite strictly ascending
  chain of subfields in $D$, $D_1\subsetneqq D_2\subsetneqq\dots$.
  By Lemma~\ref{le:Lichtman}, $D\otimes_k D^{\op}$ is not a left
  noetherian ring.
\end{proof}



In \cite {lM83}, Makar-Limanov showed that the field of fractions of
the first Weyl algebra over a commutative field $k$ of characteristic zero
contains a noncommutative free algebra over $k$ and, consequently, it contains a field of
fractions of the free algebra. However, this field of fractions
is not a free field, as the following consequence of Theorem~\ref{th:nofreefield}
shows.

\begin{corollary}
  The field of fractions of the first Weyl algebra over a
  commutative field does not contain a noncommutative free field (over any
  subfield).
\end{corollary}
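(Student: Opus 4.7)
The approach is to argue the contrapositive of Theorem~\ref{th:nofreefield}: writing $D$ for the field of fractions of $A_1(k)$, the goal is to show that $D\otimes_k D^{\op}$ is left noetherian, after first reducing free fields over arbitrary subfields to free fields over the centre.

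When $\charac k = p > 0$, $A_1(k)$ is a finitely generated module over its centre $k[x^p,y^p]$, so $D$ is a PI algebra and contains no noncommutative free subalgebra at all. I therefore restrict to $\charac k = 0$, in which case the centre of $D$ equals $k$. Suppose for contradiction that $D \supseteq \ff{F}{}{x,y}$ for some subfield $F$. Then $F$ contains the prime subfield $\mathbb{Q}$, and the natural inclusion $\mathbb{Q}\langle x,y\rangle \hookrightarrow F\langle x,y\rangle$ is an honest embedding of firs by \cite[Theorem~6.4.6]{pC95}, so passing to universal fields of fractions yields $\ff{\mathbb{Q}}{}{x,y}\hookrightarrow\ff{F}{}{x,y}\subseteq D$. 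Invoking \cite[Lemma~9]{kC03} to lift from the prime subfield up to the central subfield $k$ then gives $\ff{k}{}{x,y}\subseteq D$.

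It thus suffices to verify that $D\otimes_k D^{\op}$ is left noetherian. The $k$-antiautomorphism of $A_1(k)$ exchanging the two generators identifies $A_1(k)^{\op}$ with $A_1(k)$, giving $A_1(k)\otimes_k A_1(k)^{\op}\cong A_2(k)$, the second Weyl algebra, which is a simple noetherian domain. I would then realise $D\otimes_k D^{\op}$ as a two-step Ore localisation of $A_2(k)$: first invert $(A_1(k)\setminus 0)\otimes 1$ to obtain $D\otimes_k A_1(k)^{\op}$, which is the first Weyl algebra over the division ring $D$ and hence left noetherian; then invert the image of $1\otimes (A_1(k)^{\op}\setminus 0)$ inside it to land in $D\otimes_k D^{\op}$.

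The main technical obstacle is verifying the Ore conditions at each of these localisation steps. Here I plan to exploit the mutual commutation of $A_1(k)\otimes 1$ and $1\otimes A_1(k)^{\op}$ inside $A_2(k)$, together with the noetherianity of the intermediate rings, to produce the required common right multiples. Once $D\otimes_k D^{\op}$ is known to be left noetherian, Theorem~\ref{th:nofreefield} applied with $k$ as the central subfield contradicts $\ff{k}{}{x,y}\subseteq D$, completing the argument.
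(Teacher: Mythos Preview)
Your plan is correct and follows essentially the same route as the paper: establish that $D\otimes_k D^{\op}$ is left noetherian by identifying it as an Ore localisation of a Weyl algebra over a skew field, apply Theorem~\ref{th:nofreefield}, and then reduce the case of an arbitrary coefficient subfield to the central one.

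Two small points of comparison. First, the paper bypasses your detour through $A_2(k)$: it goes straight to $S=A_1(k)\otimes_k D^{\op}\cong D^{\op}\langle x,y:xy-yx=1\rangle$, which is already a Weyl algebra over the skew field $D^{\op}$ and hence left noetherian, and then performs a single localisation at $T=(A_1(k)\setminus\{0\})\otimes 1$. This also dissolves your ``main technical obstacle'': $T$ is automatically a left denominator set in $S$ because $A_1(k)$ is left Ore and $T$ commutes with the factor $1\otimes D^{\op}$, so no separate verification of Ore conditions is needed. Second, the paper's reduction from an arbitrary subfield to $k$ runs in the opposite direction from yours (first exclude free fields over $k$, then over any central subfield via \cite[Lemma~9]{kC03}, then over any subfield via \cite[Corollary~pg.~114]{pC77} and \cite[Theorem~5.8.12]{pC95}); your route via the prime field and the honest embedding of \cite[Theorem~6.4.6]{pC95} is a legitimate alternative. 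Your explicit treatment of positive characteristic is a nice addition the paper omits.
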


\begin{proof}
  Let $A=k\langle x,y : xy-yx =1 \rangle$ be the first Weyl algebra over a field $k$
  of characteristic zero,
  let $D$ be its left classic field of fractions and let
  $S=A\otimes_k D^{\op}$. Then $S$ is a Weyl algebra over the
  skew field $D^{\op}$, that is, $S\cong D^{\op}\langle x, y :xy-yx=1
  \rangle$. Since $A$ is a left Ore domain, $T=(A\setminus\{0\})\otimes_k 1$ is a left denominator
  subset of $S$. Since $S$ is left noetherian, because it is a Weyl algebra over a skew field,
  $T^{-1}S\cong D\otimes_k D^{\op}$ is also left noetherian. By Theorem~\ref{th:nofreefield},
  $D$ does not contain a free field over $k$. Therefore, $D$ does not contain a free field over
  any central subfield, by \cite[Lemma~9]{kC03}. Consequently, $D$ does not
  contain a free field over any subfield, by \cite[Corollary~pg.~114]{pC77} and
  \cite[Theorem~5.8.12]{pC95}.
\end{proof}



Now consider a nonabelian torsion-free polycyclic-by-finite group
$G$. We recall that the group ring $KG$ is a noetherian domain for
every skew field $K$ (cf.~\cite[Proposition~1.6 and
Corollary~37.11]{dP89}). Let $k$ be a commutative field and consider
the group algebra $kG$. Let $F$ denote the left classic field of
fractions of $kG$ and consider the $k$-algebra $R=kG\otimes_k
F^{\op}\cong F^{\op}G$. Proceeding in similar manner as we did
above it is possible to show that $F\otimes_k F^{\op}$ is a left
noetherian ring. By Theorem~\ref{th:nofreefield} we have the
following result.

\begin{corollary}
  The field of fractions of the group algebra of a nonabelian
  torsion-free polycyclic-by-finite group over a commutative field
  does not contain a noncommutative free field (over any subfield).\hfill\qed
\end{corollary}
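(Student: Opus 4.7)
The plan is to imitate the proof of the preceding Weyl-algebra corollary, the essential step being to show that $F\otimes_k F^{\op}$ is left noetherian. Once that is in hand, Theorem~\ref{th:nofreefield} and a descent step from $k$ to an arbitrary subfield finish the argument.

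First I would set $R=kG\otimes_k F^{\op}$ and identify $R$ with the ordinary group ring $F^{\op}G$ via the map $a\otimes f\mapsto fa$. This is legitimate because inside $R$ the factors $kG$ and $F^{\op}$ commute, which is exactly how the scalars $F^{\op}$ commute with the group elements inside the group ring. Since $G$ is torsion-free polycyclic-by-finite and $F^{\op}$ is a skew field, the group ring $F^{\op}G$ is a noetherian domain (this is the fact recalled immediately before the statement); hence so is $R$.

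Next I would localize $R$ at the multiplicative set $T=(kG\setminus\{0\})\otimes 1$. Under the isomorphism $R\cong F^{\op}G$, this set corresponds to $kG\setminus\{0\}\subseteq F^{\op}G$. Being a noetherian domain, $F^{\op}G$ is a left Ore domain by Goldie's theorem, so every nonzero element satisfies the left Ore condition and $T$ is a left denominator subset of $R$. Inverting $T$ is precisely what turns the first tensor factor $kG$ into its field of fractions $F$, so $T^{-1}R=F\otimes_k F^{\op}$; as a left Ore localization of a left noetherian ring, it is itself left noetherian.

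Finally, since $k$ is central in $kG$ and centrality is preserved by passage to the left classical field of fractions, $k$ is a central subfield of $F$. Theorem~\ref{th:nofreefield} then rules out a free field $\ff{k}{}{x,y}$ inside $F$, and the upgrade to ``no free field over any subfield'' proceeds verbatim as in the Weyl corollary, via \cite[Lemma~9]{kC03} to move from $k$ to the prime field and then to the centre of $F$, followed by \cite[Corollary~pg.~114]{pC77} and \cite[Theorem~5.8.12]{pC95}. No deep obstacle is anticipated; the only point that needs a moment's care is the verification that $T$ is a left denominator subset and that $T^{-1}R\cong F\otimes_k F^{\op}$, which together constitute the ``Proceeding in similar manner'' left implicit in the text.
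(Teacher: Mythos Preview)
Your approach is exactly the paper's: identify $R=kG\otimes_k F^{\op}$ with $F^{\op}G$, use that $F^{\op}G$ is noetherian, localize at $T=(kG\setminus\{0\})\otimes 1$ to obtain $F\otimes_k F^{\op}$, and then invoke Theorem~\ref{th:nofreefield} together with the descent to arbitrary subfields via \cite[Lemma~9]{kC03}, \cite[Corollary~pg.~114]{pC77} and \cite[Theorem~5.8.12]{pC95}.

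One small slip in your justification: the fact that $F^{\op}G$ is a left Ore domain only tells you that $F^{\op}G\setminus\{0\}$ is a left Ore set in $F^{\op}G$; it does not by itself guarantee that the Ore partners for elements of $T$ can be chosen inside $T$. The correct reason $T$ is a left denominator set is the one the paper uses in the Weyl case: $kG$ is itself a left Ore domain (being a noetherian domain), and since $1\otimes F^{\op}$ commutes with $kG\otimes 1$ inside $R$, the left Ore condition for $kG\setminus\{0\}$ in $kG$ propagates to $T$ in $R$. With that adjustment your argument is complete and matches the paper's.
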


Let $H$ be a nonabelian torsion-free finitely generated nilpotent
group and consider the group algebra $kH$. Since $H$ is
polycyclic-by-finite then the group algebra $kH$ is a noetherian
domain. We remark that, again, Makar-Limanov proved in \cite{lM84}
that the field of fractions $Q$ of $kH$ contains a noncommutative
free algebra over $k$. By what we have just seen, the subfield of
$Q$ generated by this free algebra is not a free field.


\section{Free fields in Malcev-Neumann series ring}\label{sec:mn}


\noindent

In this section we shall see that a ring of Malcev-Neumann series of
a torsion-free nilpotent group over a division ring contains a free
field.

Let $G$ be a finitely generated torsion-free nilpotent group. We follow
\cite{sJ55} and recall some facts about $G$. First, $G$ has at least one
central series
\begin{equation}
  G=F_1 \supseteq F_2 \supseteq F_3 \supseteq \dots \supseteq F_r \supseteq
  F_{r+1}=\{1\} \label{eq:F-series}
\end{equation}
such that for all $i=1, \dots, r$, $F_i/F_{i+1}$ is an infinite cyclic
group. (Such a series can be obtained as a refinement of the upper
central series of $G$, for instance.) A central series of $G$ whose factors are
infinite cyclic will be called an \emph{$\mathcal F$-series}. Let $f_i$ be a
representative in $G$ of a generating element of $F_i/F_{i+1}$. It follows that
every element of $G$ can be written uniquely in the form
\begin{equation}\label{eq:normalform}
  f_1^{\alpha_1}f_2^{\alpha_2} \dots f_r^{\alpha_r},
\end{equation}
with $\alpha_1, \alpha_2, \dots, \alpha_r\in \mathbf{Z}$. Given
subgroups $X,Y$ of $G$ the subgroup generated by all commutators
$(x,y)$ with $x\in X$ and $y\in Y$ will be denoted by $(X,Y)$. So,
since \eqref{eq:F-series} is a central series for $G$, we have
$(F_i,G)\subseteq F_{i+1}$, for all $i=1, \dots, r$, and, hence
$(F_j, F_i)\subseteq F_{j+1}$, whenever $j\geq i$. Because $F_{j+1}$
is a finitely generated torsion-free nilpotent group with $\mathcal
F$-series $F_{j+1}\supseteq F_{j+2}\supseteq \dots \supseteq
F_r\supseteq F_{r+1}=\{1\}$, given $\alpha_i,\alpha_j\in\mathbf{Z}$,
we have
$$
  f_{j}^{\alpha_{j}}f_{i}^{\alpha_{i}}=
  f_i^{\alpha_{i}}f_{j}^{\alpha_{j}}f_{j+1}^{\gamma_{j+1}}f_{j+2}^{\gamma_{j+2}}\dots f_r^{\gamma_r},
$$
for some $\gamma_{j+1},\dots,\gamma_{r}\in\mathbf{Z}$.
In particular, $F_r=\langle f_r \rangle \subseteq
\mathcal{Z}(G)$, the centre of $G$. Finally, $G$ is an ordered group
with the lexicographic ordering, that is,
$$
  f_1^{\alpha_1}f_2^{\alpha_2}\dots f_r^{\alpha_r} < f_1^{\beta_1}f_2^{\beta_2}\dots f_r^{\beta_r}
$$
if and only if there exists some $s$ for which
$$
  \alpha_{s}<\beta_{s} \text{, while } \alpha_i=\beta_i \text {, for all $i=1,\dots, s-1$.}
$$

Assume that $G$ is nonabelian, that the elements $f_1,\dots,
f_r$ in \eqref{eq:normalform} have been chosen and that the order in
$G$ is defined as above. Let $K$ be a skew field and consider the
Malcev-Neumann skew series field $K(( G,\sigma))$, where
$\sigma\colon G\rightarrow \Aut(K)$ is a group homomorphism of $G$
into the automorphism group $\Aut(K)$ of $K$. Recall that $K(( G,
\sigma))$ is constituted by those formal series $f=\sum_{g\in G}
a_gg$, with $a_g\in K$, whose support $\supp(f)=\{g\in G : a_g\neq
0\}$ is a well-ordered subset of $G$. The product in $K(( G,
\sigma))$ is induced by the relations $ga=\sigma(g)(a)g$, for all
$g\in G$ and $a\in K$. When $\sigma$ is the trivial homomorphism,
sending every element of $G$ to the identity map of $K$, we shall
simplify the notation and write $K(( G))$ for the corresponding
Malcev-Neumann skew field. The subring of $K(( G, \sigma))$ of
elements of finite support is the usual skew group ring of $G$ over
$K$, henceforth denoted by $K(G, \sigma)$. When $\sigma$ is trivial
this subring is the usual group ring and is denoted by $KG$. By
\cite[Corollary~37.11]{dP89}, $K(G, \sigma)$ is an Ore domain with
field of fractions $Q(K(G, \sigma))$; so we have
$K(G,\sigma)\subseteq Q(K(G,\sigma))\subseteq K(( G,\sigma))$.

We shall show that there exists a natural discrete valuation on $K(G,\sigma)$
which extends to $K(( G,\sigma))$ turning this Malcev-Neumman skew field
into a complete field. This will eventually yield an embedding of a free
field into it.

Since any element of $G$ can be written uniquely in the form
\eqref{eq:normalform}, an element $x$ of $K(G,\sigma)$ can be
written uniquely as a finite sum
$$
  x=\sum_{I} a_{I}f_1^{\alpha_1}f_2^{\alpha_2}\dots f_r^{\alpha_r},
$$
where $I=(\alpha_1,\alpha_2,\dots, \alpha_r)\in \mathbf{Z}^r$ and $a_{I}\in
K$.

\begin{lemma}\label{le:valuesgp}
  The map $o\colon K(G,\sigma)\rightarrow \mathbf{Z}\cup\{\infty\}$, given by
  $$
    o(x) =  \min\{\alpha_1 : f_1^{\alpha_1}f_2^{\alpha_2}\dots f_r^{\alpha_r} \in
    \supp(x) \text{ for some } \alpha_2, \dots, \alpha_r \in\mathbf{Z}\},
  $$
  if $x\ne 0$ and $o(0)=\infty$, is a valuation function on $K(G,\sigma)$.
\end{lemma}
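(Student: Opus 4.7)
The plan is to reduce the verification to the observation that the assignment $\pi\colon G\to\mathbf{Z}$ sending $f_1^{\alpha_1}f_2^{\alpha_2}\cdots f_r^{\alpha_r}$ to $\alpha_1$ is a group homomorphism. Since \eqref{eq:F-series} is a central series, $F_2$ is normal in $G=F_1$ with infinite cyclic quotient, generated by the image of $f_1$; thus $\pi$ is just the composition of the quotient map $G\to G/F_2$ with the identification $G/F_2\cong\mathbf{Z}$ sending the class of $f_1$ to $1$. In these terms, $o(x)=\min\{\pi(g):g\in\supp(x)\}$ for $x\ne 0$, and $o(0)=\infty$.

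Axioms (iii) and (ii) are then immediate: $o(1)=\pi(1)=0$ and $o(0)=\infty$ by definition, and $\supp(x+y)\subseteq\supp(x)\cup\supp(y)$ yields $o(x+y)\ge\min\{o(x),o(y)\}$.

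For the multiplicative property (i), I focus on the nontrivial case $x,y\ne 0$. Setting $\alpha=o(x)$ and $\beta=o(y)$, I would decompose $x=x_\alpha+x'$, where $x_\alpha$ gathers those monomials of $x$ with $\pi$-value exactly $\alpha$ and $x'$ the (possibly zero) tail with $\pi$-value strictly greater than $\alpha$; decompose $y=y_\beta+y'$ analogously. The key observation is that in $K(G,\sigma)$ a product of two monomials has the form $a_I g_I\cdot b_J g_J=a_I\sigma(g_I)(b_J)\,g_I g_J$, supported on the single group element $g_I g_J$ of $\pi$-value $\pi(g_I)+\pi(g_J)$. Expanding
$$xy=x_\alpha y_\beta+x_\alpha y'+x'y_\beta+x'y',$$
the last three terms have every support element at a $\pi$-value strictly greater than $\alpha+\beta$, whereas every element of $\supp(x_\alpha y_\beta)$ has $\pi$-value exactly $\alpha+\beta$. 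Hence $o(xy)=\alpha+\beta$, provided $x_\alpha y_\beta\ne 0$.

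The only place where a nontrivial fact is invoked is in ensuring this nonvanishing. Since $x_\alpha,y_\beta\ne 0$ by the choice of $\alpha,\beta$, and $K(G,\sigma)$ is a domain (being an Ore domain with a field of fractions by \cite[Corollary~37.11]{dP89}, as noted in the paragraph preceding the lemma), the product $x_\alpha y_\beta$ is indeed nonzero. This is where I expect the main subtlety would lie if one attempted a direct, bare-hands proof of multiplicativity by tracking leading coefficients through all the commutations in the normal form; invoking the domain property sidesteps that obstacle cleanly.
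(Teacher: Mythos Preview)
Your argument is correct, but it is organized differently from the paper's. The paper works directly with the lexicographic order on $G$: it picks the least elements $g=\min\supp(x)$ and $h=\min\supp(y)$, observes that $gh=\min\supp(xy)$, and checks that its coefficient $a\,\sigma(g)(b)$ is nonzero simply because $K$ is a skew field. Since the $\alpha_1$-coordinate of $gh$ is $\alpha+\beta$ (which is exactly your observation that $\pi$ is a homomorphism) and every other element of $\supp(xy)$ lies above $gh$ in the lex order, one reads off $o(xy)=\alpha+\beta$.

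Your route---identifying $\pi\colon G\to G/F_2\cong\mathbf{Z}$ and splitting $x$ and $y$ into their ``lowest $\pi$-degree'' pieces $x_\alpha$, $y_\beta$ plus tails---is conceptually clean and makes the role of the central series transparent. The trade-off is that to rule out $x_\alpha y_\beta=0$ you invoke the domain property of $K(G,\sigma)$ from \cite{dP89}, whereas the paper's leading-term computation needs nothing beyond $K$ being a skew field. So the paper's proof is slightly more self-contained, while yours isolates the structural reason (the homomorphism $\pi$) more explicitly; either is perfectly acceptable here since the domain property has already been recorded in the text.
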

\begin{proof} Take nonzero elements $x,y\in K(G,\sigma)$ with $o(x)=\alpha$ and
  $o(y)=\beta$, say $\min\supp(x) = f_1^{\alpha}f_2^{\alpha_2}\dots f_r^{\alpha_r}$
  and $\min\supp(y) = f_1^{\beta}f_2^{\beta_2}\dots f_r^{\beta_r}$, and suppose that the
  coefficients of these elements of $G$ occurring in $x$ and $y$ are $a$ and $b$, respectively.
  Then the element
  $$
    g=(f_1^{\alpha}f_2^{\alpha_2}\dots f_r^{\alpha_r})(f_1^{\beta}f_2^{\beta_2}\dots f_r^{\beta_r}) =
    f_1^{\alpha+\beta}f_2^{\alpha_2+\beta_2}f_3^{\gamma_3}\dots f_r^{\gamma_r},
  $$
  for appropriate $\gamma_3,\dots,\gamma_r\in\mathbf{Z}$, occurs in $xy$ with coefficient
  $a\sigma(f_1^{\alpha}f_2^{\alpha_2}\dots f_r^{\alpha_r})(b)\neq 0$. So $g$ is the least
  element in $\supp(xy)$ and, hence, $o(xy) = \alpha + \beta$.
  The other conditions required for $o$ to be a valuation are clearly satisfied.
\end{proof}

Denote by $\nu\colon Q(K(G, \sigma)) \rightarrow \mathbf{Z}\cup
\{\infty\}$ the unique extension of $o$ to a valuation on
$Q(K(G, \sigma))$.

Given a nonzero $x=\sum_I a_I f_1^{\alpha_1}f_2^{\alpha_2}\dots f_r^{\alpha_r}\in K(( G, \sigma))$,
since $\supp(x)$ is well-ordered, the integer
$\min\{\alpha_1 : f_1^{\alpha_1}f_2^{\alpha_2}\dots f_r^{\alpha_r} \in
\supp(x) \text{ for some } \alpha_2, \dots, \alpha_r \in\mathbf{Z}\}$
is well defined.

\begin{lemma}\label{le:valuemns}
  The map $\hat{\nu}\colon K(( G, \sigma)) \rightarrow \mathbf{Z}\cup \{\infty\}$,
  defined by
  $$
    \hat{\nu}(x) =  \min\{\alpha_1 : f_1^{\alpha_1}f_2^{\alpha_2}\dots f_r^{\alpha_r} \in
    \supp(x) \text{ for some } \alpha_2, \dots, \alpha_r \in\mathbf{Z}\},
  $$
  if $x\ne 0$ and $\hat{\nu}(0)=\infty$, is a valuation function on $K((G, \sigma))$ which
  extends the valuation $\nu$ on $Q(K(G, \sigma))$.
\end{lemma}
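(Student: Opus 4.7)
My plan is to reduce everything to the observation that the assignment $g\mapsto\alpha_1$ extracting the first exponent in the normal form \eqref{eq:normalform} is in fact a group homomorphism $\pi\colon G\rightarrow\mathbf{Z}$ with kernel $F_2$, and that this homomorphism is order-preserving with respect to the lexicographic order on $G$, that is, $g\leq h$ in $G$ implies $\pi(g)\leq\pi(h)$. Once this is in place, one can rewrite the definition as $\hat{\nu}(x)=\min\pi(\supp(x))$ for $x\neq 0$, and the image $\pi(\supp(x))$ is a well-ordered subset of $\mathbf{Z}$ because $\supp(x)$ is well-ordered in $G$ and $\pi$ is monotone; thus $\hat{\nu}$ is well defined.

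The additive and triviality axioms will follow essentially from set-theoretic considerations: $\hat{\nu}(x+y)\geq\min\{\hat{\nu}(x),\hat{\nu}(y)\}$ is immediate from $\supp(x+y)\subseteq\supp(x)\cup\supp(y)$, while $\hat{\nu}(1)=0$ and $\hat{\nu}(0)=\infty$ are clear. The multiplicative axiom $\hat{\nu}(xy)=\hat{\nu}(x)+\hat{\nu}(y)$ is the main point. For nonzero $x,y$, let $g_0=\min\supp(x)$ and $h_0=\min\supp(y)$ (with coefficients $a,b\in K^{\times}$). Using that $G$ is an ordered group so that multiplication preserves the order on either side, any other pair $(g',h')$ with $g'\in\supp(x)$, $h'\in\supp(y)$ satisfies $g_0h_0\leq g'h'$ with equality only when $g'=g_0$ and $h'=h_0$; consequently $g_0h_0$ appears in $\supp(xy)$ with coefficient $a\sigma(g_0)(b)\neq 0$ and is the minimum of $\supp(xy)$. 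Applying $\pi$ and using that it is a group homomorphism then yields $\hat{\nu}(xy)=\pi(g_0)+\pi(h_0)=\hat{\nu}(x)+\hat{\nu}(y)$.

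Finally, to see that $\hat{\nu}$ extends $\nu$, it suffices to observe that the formula defining $\hat{\nu}$ restricts to the formula defining $o$ on $K(G,\sigma)$; since $\nu$ is the unique extension of $o$ to $Q(K(G,\sigma))$ and $\hat{\nu}$ is a valuation on $K((G,\sigma))$ restricting to $o$ on $K(G,\sigma)$, the extensions must agree on $Q(K(G,\sigma))$. The only step with any real content is the multiplicativity; there the potential obstacle is ruling out cancellation in the coefficient of the candidate minimum element $g_0h_0$ in $xy$, and this is dispatched precisely by the uniqueness argument using the compatibility of the order with multiplication.
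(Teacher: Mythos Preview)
Your proof is correct and follows essentially the same route as the paper's. The paper simply says ``the same argument in the proof of Lemma~\ref{le:valuesgp} shows that $\hat{\nu}$ is a valuation function'' and then observes $\hat{\nu}|_{K(G,\sigma)}=o$, whence $\hat{\nu}|_{Q(K(G,\sigma))}=\nu$; unpacking that reference, one finds exactly your argument that $\min\supp(xy)=(\min\supp x)(\min\supp y)$ with nonzero coefficient $a\sigma(g_0)(b)$, together with the observation that the first exponent of a product is the sum of the first exponents. Your packaging via the monotone surjection $\pi\colon G\twoheadrightarrow G/F_2\cong\mathbf{Z}$ is a clean way to phrase both the well-definedness of $\hat{\nu}$ and the additivity on products, but the mathematical content is identical.
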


\begin{proof}
  The same argument in the proof of Lemma \ref{le:valuesgp}
  shows that $\hat{\nu}$ is a valuation function. By definition, $\hat{\nu}|_{K(G,\sigma)} = o$, so
  $\hat{\nu}|_{Q(K(G,\sigma))} = \nu$.
\end{proof}

The next step is to show that $K((G,\sigma))$ is complete with respect
to this valuation.

\begin{lemma}\label{le:completemns}
  The skew field of Malcev-Neumann series $K((G,\sigma))$ is
  complete with respect to the valuation $\hat{\nu}$.
\end{lemma}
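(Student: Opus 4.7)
The plan is to prove completeness by the standard layer-by-layer reconstruction argument, using the fact that the lexicographic order on $G$ stratifies $G$ into sheets indexed by the exponent of $f_1$.

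Concretely, for each $x\in K((G,\sigma))$, since the normal form \eqref{eq:normalform} implies $G=\bigsqcup_{n\in\mathbf{Z}}f_1^n F_2$, I can decompose $x=\sum_{n\in\mathbf{Z}}y_n$, where $y_n$ collects the terms of $x$ whose support lies in $f_1^n F_2$. Note that because elements of $f_1^m F_2$ are lexicographically smaller than those of $f_1^n F_2$ whenever $m<n$, the subset $\supp(x)\subseteq G$ is well-ordered if and only if each $\supp(y_n)\subseteq f_1^n F_2$ is well-ordered and $\{n:y_n\ne 0\}$ is bounded below in $\mathbf{Z}$; moreover $\hat\nu(y_n)=n$ whenever $y_n\ne 0$.

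Now let $(x_k)_{k\ge 1}$ be a Cauchy sequence in $K((G,\sigma))$ and write $x_k=\sum_n y_{k,n}$ as above. The Cauchy condition says that for every integer $N$ there exists $k_0(N)$ with $\hat\nu(x_k-x_l)\ge N$ whenever $k,l\ge k_0(N)$; this is equivalent to $y_{k,n}=y_{l,n}$ for all $n<N$ and $k,l\ge k_0(N)$. Hence, for each fixed $n\in\mathbf{Z}$ the sequence $(y_{k,n})_k$ is eventually constant; call its eventual value $z_n$. Next I define $x=\sum_{n\in\mathbf{Z}}z_n$ and need to verify that this is a legitimate element of $K((G,\sigma))$.

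The main obstacle is to check the well-ordering of $\supp(x)=\bigcup_n\supp(z_n)$. Each $\supp(z_n)$ is well-ordered because $z_n=y_{k,n}$ for $k$ large, and $\supp(y_{k,n})=\supp(x_k)\cap f_1^n F_2$ is a subset of a well-ordered set. For a lower bound on $\{n:z_n\ne 0\}$, I fix $k_0=k_0(0)$: for every $k\ge k_0$ and every $n<0$ one has $y_{k,n}=y_{k_0,n}$, so $z_n=y_{k_0,n}$ whenever $n<0$. Since $x_{k_0}\in K((G,\sigma))$ has well-ordered support, $\{n<0:y_{k_0,n}\ne 0\}$ is bounded below, and combining this with the trivial lower bound on the nonnegative part gives the required bound on $\{n:z_n\ne 0\}$. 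Thus $x$ lies in $K((G,\sigma))$.

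Finally, convergence: for any $N$ and any $k\ge k_0(N)$, $y_{k,n}=z_n$ for all $n<N$, so $x_k-x=\sum_{n\ge N}(y_{k,n}-z_n)$ has support contained in $\{g\in G:\alpha_1(g)\ge N\}$, whence $\hat\nu(x_k-x)\ge N$. This shows $x_k\to x$ in the $\hat\nu$-metric, and completeness is established. The routine first two valuation axioms for $\hat{\nu}$ are already guaranteed by Lemma~\ref{le:valuemns}, so no further calculation is needed; the whole argument rests on the layer decomposition and the elementary characterization of well-ordered subsets of $G$ given above.
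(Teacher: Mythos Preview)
Your argument is correct and follows essentially the same route as the paper's proof: both exploit the stratification of $G$ by the exponent of $f_1$, observe that the Cauchy condition forces the layers with a fixed $f_1$-exponent to stabilize, build the limit from these stabilized layers, and verify well-ordering by noting that the negative-exponent part sits inside the support of a single term of the sequence while each nonnegative layer sits inside the support of some $x_k$. The only differences are cosmetic---you package the slices as the $y_{k,n}$ and state the well-ordering criterion up front, whereas the paper works coefficient-by-coefficient and passes to an explicit subsequence $(u_{n_k})$.
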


\begin{proof}
  Take a real constant $c\in(0,1)$ and consider the metric $d$ in $K((G,\sigma))$
  induced by $\hat{\nu}$, that is, we set $d(x,y)=c^{\hat{\nu}(x-y)}$, for $x,y\in
  K((G,\sigma))$. Let $(u_n)_{n\in\mathbf{N}}$ be a Cauchy sequence in $K(( G, \sigma))$, say
  $$
    u_n=\sum_{I=(\alpha_1,\dots,  \alpha_r)}
      a_{I}^{(n)}f_1^{\alpha_1}f_2^{\alpha_2}\dots f_r^{\alpha_r},
  $$
  where $a_{I}^{(n)} \in K$. For each $k\in \mathbf{Z}$,
  choose $n_k \in \mathbf{N}$ such that $d(u_p,u_q)<c^k$,
  whenever $p, q \geq n_k$, satisfying $n_0<n_1<n_2<\dots<n_k<\dots$ and
  consider the subsequence $(u_{n_k})_{k\in\mathbf{N}}$ of $(u_n)$.
  Clearly, for every $k\in\mathbf{Z}$, we have $d(u_p,u_q)<c^k$ if and only if
  $a_{I}^{(p)}=a_{I}^{(q)}$, for all $I=(\alpha_1,\dots, \alpha_r)$ with $\alpha_1\leq k$.
  So, if $l\geq k\geq 0$, then
  \begin{align}
    a_I^{(n_k)}&=a_I^{(n_0)} \text{, for all $I=(\alpha_1,\dots,\alpha_r)$
      with $\alpha_1\leq 0$}\label{MN2}\\
    \intertext{and}
    a_I^{(n_l)}&=a_I^{(n_k)} \text{, for all $I=(\alpha_1,\dots,\alpha_r)$
      with $\alpha_1\leq k$.}\label{MN3}
  \end{align}
  We shall show that $(u_{n_k})$ is convergent in $K((G,\sigma))$. Consider
  the element
  \begin{equation}\label{eq:sumu}
    u=\sum_{\substack{I=(\alpha_1,\dots,\alpha_r)\\\alpha_1<0}}
        a_I^{(n_0)}f_1^{\alpha_1}f_2^{\alpha_2}\dots f_r^{\alpha_r} +
      \sum_{\substack{I=(\alpha_1,\dots,\alpha_r)\\\alpha_1\geq0}}
        a_{I}^{(n_{\alpha_1})}f_1^{\alpha_1}f_2^{\alpha_2}\dots f_r^{\alpha_r}.
  \end{equation}
  Let us prove that $u\in K(( G, \sigma))$, that is, that $\supp(u)$ is well-ordered.
  Let $v$ denote the first part of the sum in $u$ (that is, the sum of all the
  terms with $\alpha_1<0$ in \eqref{eq:sumu}) and $w$, the second.
  Because it is a subset of $\supp(u_{n_0})$, $\supp(v)$ is well-ordered.
  As for $w$, let $S$ be a nonempty subset of $\supp(w)$ and let $\alpha$ be the
  smallest integer such that $f_1^{\alpha}f_2^{\alpha_2}\dots f_r^{\alpha_r}\in S$
  for some choice of $\alpha_2,\dots,\alpha_r\in\mathbf{Z}$. Such an $\alpha$ exists because,
  by definition, if $f_1^{\alpha_1}\dots f_r^{\alpha_r}\in\supp(w)$, then,
  necessarily, $\alpha_1\geq 0$. Let $T$ be the set formed by all the elements
  of $S$ which are of the form $f_1^{\alpha}f_2^{\alpha_2}\dots f_r^{\alpha_r}$, for
  some $\alpha_2,\dots,\alpha_r\in\mathbf{Z}$. We have seen that $T\ne \emptyset$.
  Furthermore, by the definition of $w$, we have $T\subseteq\supp(u_{n_{\alpha}})$ and,
  therefore, $T$ has a smallest element which is easily seen to be the
  smallest element of $S$. It follows that $\supp(w)$ is well-ordered and,
  therefore, $u=v+w\in K((G,\sigma))$.

  Finally, we prove that $(u_{n_k})$ converges to $u$. Given $\epsilon>0$
  let $t\in \mathbf{Z}$ be such that $c^t<\epsilon$. For every $k\geq t$, we have
  \begin{equation*}\begin{split}
    u_{n_k}-u
      & = \sum_{\substack{I=(\alpha_1,\dots,\alpha_r)\\\alpha_1<0}}
          (a_{I}^{(n_k)}-a_I^{(n_0)})f_1^{\alpha_1}f_2^{\alpha_2}\dots f_r^{\alpha_r}\\
      & \quad +\sum_{\substack{I=(\alpha_1,\dots,\alpha_r)\\ 0\leq\alpha_1\leq k}}
          (a_{I}^{(n_k)}-a_I^{(n_{\alpha_1})})f_1^{\alpha_1}f_2^{\alpha_2}\dots f_r^{\alpha_r}\\
      & \quad +\sum_{\substack{I=(\alpha_1,\dots,\alpha_r)\\\alpha_1\geq k+1}}
          (a_{I}^{(n_k)}-a_I^{(n_{\alpha_1})})f_1^{\alpha_1}f_2^{\alpha_2}\dots f_r^{\alpha_r}\\
      & = \sum_{\substack{I=(\alpha_1,\dots,\alpha_r)\\\alpha_1\geq k+1}}
          (a_{I}^{(n_k)}-a_I^{(n_{\alpha_1})})f_1^{\alpha_1}f_2^{\alpha_2}\dots f_r^{\alpha_r},
  \end{split}\end{equation*}
  by \eqref{MN2} and \eqref{MN3}. Thus, $\hat{\nu}(u_{n_k}-u)\geq
  k+1>k$ and hence
  $ \hat{d}(u_{n_k},u)=c^{\hat{\nu}(u_{n_k}-u)}<c^k<\epsilon$.
  Therefore, the subsequence $(u_{n_k})$ of $(u_n)$ converges to $u$.
  Hence $(u_n)$ is convergent. It follows that $K(( G, \sigma))$ is complete.
\end{proof}



In order to prove the existence of a free field in $K((G,\sigma))$
using Theorem~\ref{th:freefield} we need to establish the infinite
dimensionality of $Q(K(G,\sigma))$ over its centre. We state a
result that holds in a larger context.

\begin{lemma}\label{le:ffskg}
  Let $H$ be a nonabelian ordered group, let $K$ be a skew field and let
  $\tau\colon H\rightarrow \Aut(K)$ be a group homomorphism. Then
  any field of fractions of the skew group ring $K(H,\tau)$ is
  infinite-dimensional over its centre.
\end{lemma}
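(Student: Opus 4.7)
The plan is to argue by contradiction, reducing the statement about arbitrary fields of fractions to one about polynomial identities on the plain group algebra $kH$, and then using the bi-order on $H$ to force $H$ to be abelian.

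Assume $F$ is finite-dimensional over its centre $Z$, say $[F:Z]=d^2$. By the Amitsur--Levitzki theorem, $F$ satisfies the standard identity $s_{2d}$, and hence so does its subring $R=K(H,\tau)$. The first reduction is to observe that the prime subfield $k$ of $K$ is fixed pointwise by every ring automorphism of $K$; hence the inclusion $kH\hookrightarrow R$, obtained by restricting to $k$-coefficients, is a ring embedding, so the ordinary group algebra $kH$ is PI. I would then invoke Passman's theorem on PI group algebras which, combined with the torsion-freeness of $H$ (a consequence of the ordering), forces $H$ to contain an abelian subgroup $A$ of finite index: in characteristic zero this is immediate, while in characteristic $p$ one gets an $A$ with $A'$ a finite $p$-group, but torsion-freeness kills $A'$.

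The final step is to upgrade ``abelian of finite index'' to ``abelian'' using bi-invariance of the order. In any bi-orderable group, $x^n=y^n$ for some $n>0$ forces $x=y$ (if $x<y$, bi-invariance yields $x^n<y^n$). Applied to $x$ and $hxh^{-1}$, this shows centralizers in $H$ are isolated: if $h^n$ commutes with $x$ then $h$ commutes with $x$. Taking $n=[H:A]$, every $h\in H$ satisfies $h^n\in A$, so $h^n$ centralises all of $A$, and hence so does $h$; thus $A\subseteq Z(H)$. Then $[H:Z(H)]<\infty$, Schur's theorem renders $[H,H]$ finite, and torsion-freeness forces $[H,H]=1$, making $H$ abelian -- contradicting the hypothesis. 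The principal non-routine ingredient is Passman's theorem; everything else (Amitsur--Levitzki, Schur, unique roots in bi-orderable groups) is standard, and the main conceptual step is recognizing polynomial identity theory as the bridge between the ring-theoretic hypothesis and the group-theoretic conclusion.
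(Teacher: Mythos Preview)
Your approach is essentially the paper's: both argue that finite dimension over the centre forces $F$, and hence the ordinary group algebra $\pi H$ over the prime subfield, to be PI, and then deduce that $H$ is abelian. The paper dispatches the last step by citing \cite[Lemma~10]{kC03}, whereas you unpack it via Passman's theorem together with order-theoretic properties of $H$; so your write-up is a more self-contained version of the same argument.

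Two small slips are worth fixing. First, in the isolated-centralizer step your unique-roots argument should be applied to $h$ and $xhx^{-1}$, not to $x$ and $hxh^{-1}$: from $h^n x = x h^n$ one gets $(xhx^{-1})^n = h^n$, and unique roots then gives $xhx^{-1}=h$. As written, applying it to $x$ and $hxh^{-1}$ would only yield $C_H(x^n)=C_H(x)$, which is the wrong direction. Second, $[H:A]=n$ does not guarantee $h^n\in A$ for every $h$; what you get is $h^m\in A$ for some $1\le m\le n$ depending on $h$, which is all you need. With these corrections the argument goes through.
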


\begin{proof}
  Let $F$ be a field of fractions of $K(H,\tau)$ and denote by $\pi$ the
  prime subfield of $K$. If $F$ were finite-dimensional
  over its centre, then $F$ and, therefore, the group algebra $\pi H$ would be PI-rings.
  By \cite[Lemma~10]{kC03}, this would imply that $G$ is an abelian group.
\end{proof}

\begin{theorem}\label{th:MNS}
  Let $K$ be a skew field, let $G$ be a nonabelian finitely generated
  torsion-free nilpotent group and let $\sigma\colon G\rightarrow \Aut(K)$ be
  a group homomorphism. If the centre $Z$ of the classical field of fractions
  of the skew group ring $K(G,\sigma)$ is infinite then the skew field of
  Malcev-Neumman series $K(( G, \sigma))$ contains a free field $\ff{Z}{}{X}$
  on a countable set $X$.
\end{theorem}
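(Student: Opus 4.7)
The plan is to apply Theorem~\ref{th:freefield} to the skew field $D=Q(K(G,\sigma))$ equipped with the discrete valuation $\nu$ constructed just above (the unique extension of $o$ from Lemma~\ref{le:valuesgp}), and then identify the completion $\widehat{D}$ with a subfield of $K((G,\sigma))$. The three hypotheses of Theorem~\ref{th:freefield} need to be verified for $D$: that its centre is infinite, that $D$ is infinite-dimensional over its centre, and that it carries a discrete valuation. The first is true by assumption, since we are told the centre $Z$ of $Q(K(G,\sigma))$ is infinite. The second follows directly from Lemma~\ref{le:ffskg} applied to the nonabelian ordered group $G$ and the group homomorphism $\sigma$: any field of fractions of $K(G,\sigma)$, and in particular $D$, is infinite-dimensional over its centre. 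For the third, observe that by construction the value group of $\nu$ is contained in $\mathbf{Z}$ and is nontrivial (since $o(f_1)=1$), so $\nu(D^\times)=\mathbf{Z}$, i.e.~$\nu$ is discrete.

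With all hypotheses in place, Theorem~\ref{th:freefield} yields a free field $\ff{Z}{}{X}$ on a countable set $X$ inside the completion $\widehat{D}$ of $D$ with respect to $\nu$. It remains to realize $\widehat{D}$ as a subfield of $K((G,\sigma))$. For this I invoke Lemma~\ref{le:completemns}: $K((G,\sigma))$ is itself complete with respect to $\hat{\nu}$, and by Lemma~\ref{le:valuemns}, $\hat{\nu}$ restricts to $\nu$ on $D\subseteq K((G,\sigma))$. Consequently the closure of $D$ inside the complete metric space $K((G,\sigma))$ is a complete subfield extending $(D,\nu)$, and by the uniqueness of the completion it is isomorphic to $\widehat{D}$ over $D$. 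Therefore $\widehat{D}\subseteq K((G,\sigma))$, and the free subfield $\ff{Z}{}{X}$ produced by Theorem~\ref{th:freefield} lies inside $K((G,\sigma))$, as desired.

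The routine part is the verification of hypotheses, so I expect the main conceptual point to be the identification of $\widehat{D}$ with a subfield of $K((G,\sigma))$ rather than an abstract object; but this is immediate from the completeness statement in Lemma~\ref{le:completemns} together with the fact that $\hat{\nu}$ restricts to $\nu$ on $D$. Consequently, no truly hard step arises: the theorem is essentially the payoff of having set up the valuation $\hat{\nu}$, proved the completeness of $K((G,\sigma))$, and verified the dimension hypothesis via Lemma~\ref{le:ffskg}, so the proof amounts to citing Theorem~\ref{th:freefield} and these earlier lemmas in the correct order.
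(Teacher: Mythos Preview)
Your proof is correct and follows essentially the same route as the paper: verify the hypotheses of Theorem~\ref{th:freefield} for $Q(K(G,\sigma))$ via Lemma~\ref{le:ffskg} and the assumption on $Z$, obtain the free field in the completion, and then identify that completion with the closure of $Q(K(G,\sigma))$ inside $K((G,\sigma))$ using Lemmas~\ref{le:valuemns} and~\ref{le:completemns}. The only addition is your explicit remark that $o(f_1)=1$ ensures the value group is all of $\mathbf{Z}$, which the paper leaves implicit.
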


\begin{proof}
  Let $Q$ denote the classical field of fractions of $K(G,\sigma)$.
  Since by Lemma~\ref{le:ffskg} $Q$ is infinite-dimensional
  over its centre $Z$, it follows from Theorem~\ref{th:freefield} that
  the completion $\widehat{Q}$ of $Q$ with respect to the topology defined by the
  discrete valuation extending the valuation $o$ of $K(G,\sigma)$
  defined in Lemma~\ref{le:valuesgp} contains a free field $\ff{Z}{}{X}$
  on a countable set $X$. As a consequence of Lemma~\ref{le:valuemns}, the natural
  monomorphism $Q\hookrightarrow K(( G, \sigma))$ is an isometric embedding.
  Since, by Lemma~\ref{le:completemns}, $K(( G, \sigma))$ is a complete
  metric space, the closure of $Q$ in $K((G,\sigma))$ coincides with
  $\widehat{Q}$. It thus follows that $K(( G, \sigma))$
  contains $\ff{Z}{}{X}$.
\end{proof}

If $\sigma$ is trivial, then the centre $Z$ of $Q$ is necessarily
infinite, because, using the notation in the first paragraph of this
section, the infinite cyclic group $F_r$ is a subgroup of the centre
$\mathcal{Z}(G)$ of $G$. Therefore,
$F_r\subseteq\mathcal{Z}(G)\subseteq Z(KG)\subseteq Z$. This proves
the next result.

\begin{corollary}
  Let $K$ be a skew field and let $G$ be a nonabelian finitely generated
  torsion-free nilpotent group. Then the skew field of
  Malcev-Neumman series $K((G))$ contains a free field $\ff{Z}{}{X}$ over
  the centre $Z$ of the classical field of fractions of the group ring
  $KG$ on a countable set $X$.\hfill\qed
\end{corollary}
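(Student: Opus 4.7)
The plan is to derive this as an immediate specialization of Theorem~\ref{th:MNS} to the trivial homomorphism $\sigma\colon G\to\Aut(K)$. Under this choice $K(G,\sigma)$ collapses to the ordinary group ring $KG$ and $K((G,\sigma))$ to $K((G))$, so Theorem~\ref{th:MNS} yields the desired copy of $\ff{Z}{}{X}$ once its only remaining hypothesis has been checked, namely that the centre $Z$ of the classical field of fractions $Q$ of $KG$ is infinite.

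To verify this, I would appeal to the $\mathcal{F}$-series \eqref{eq:F-series} already fixed at the start of the section. Its last nontrivial term $F_r=\langle f_r\rangle$ is infinite cyclic because every factor of the series is infinite cyclic and $F_{r+1}=\{1\}$. Since the series is central, $(F_r,G)\subseteq F_{r+1}=\{1\}$, and hence $F_r\subseteq\mathcal{Z}(G)$. Moreover, an element of $\mathcal{Z}(G)$ commutes with the whole $K$-basis of $KG$ and therefore lies in $Z(KG)$; and any element central in a domain remains central in each of its fields of fractions, as one sees from the standard calculation $abc^{-1}=bac^{-1}=bc^{-1}a$ for $a\in Z(KG)$ and $b,c\in KG$. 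This gives the chain
$$F_r \;\subseteq\; \mathcal{Z}(G) \;\subseteq\; Z(KG) \;\subseteq\; Z,$$
so $Z$ is infinite and Theorem~\ref{th:MNS} applies.

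There is no genuine obstacle in this argument: the substantive work — the construction of the discrete valuation $\hat{\nu}$ in Lemma~\ref{le:valuemns}, the completeness of $K((G))$ in Lemma~\ref{le:completemns}, the infinite-dimensionality of $Q$ over its centre in Lemma~\ref{le:ffskg}, and the appeal to Theorem~\ref{th:freefield} — has already been absorbed into Theorem~\ref{th:MNS}. The corollary is therefore little more than a bookkeeping step, and its proof consists essentially of the paragraph sketched above together with one line invoking Theorem~\ref{th:MNS}.
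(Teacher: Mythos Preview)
Your proposal is correct and follows exactly the paper's approach: specialize Theorem~\ref{th:MNS} to the trivial $\sigma$, and verify the single remaining hypothesis that $Z$ is infinite via the chain $F_r\subseteq\mathcal{Z}(G)\subseteq Z(KG)\subseteq Z$. The paper's own argument is precisely this paragraph (placed just before the corollary), with your version simply spelling out in slightly more detail why the last two inclusions hold.
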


%

\section{Skew Laurent series}\label{sec:ls}

In this section we show that a large class of division rings, namely
fields of fractions of skew polynomial rings over skew fields, are
endowed with natural valuations whose completions contain free
fields. In particular, it will be shown that the Weyl field can be
embedded in a complete skew field containing a free field. Similar
methods will be applied to deal with fields generated by quantum
matrices.



Let $A$ be a ring, let $\sigma$ be an endomorphism of $A$ and let
$\delta$ be a (right) $\sigma$-derivation of $A$, that is,
$\delta\colon A\rightarrow A$ is an additive map satisfying
$$
\delta(ab) = \delta(a)\sigma(b)+a\delta(b),
$$
for all $a,b\in A$. The \emph{skew polynomial ring in $x$ over $A$},
hereafter denoted by $A[x;\sigma,\delta]$, is the free right
$A$-module on the nonnegative powers of $x$ with multiplication
induced by
$$
ax = x\sigma(a)+\delta(a),
$$
for all $a\in A$. If $A$ is a skew field then $A[x; \sigma, \delta]$
is a principal right ideal domain and its field of fractions will be
denoted by $A(x; \sigma, \delta)$. When $A$ is a right Ore domain
with field of fractions $Q(A)$ and $\sigma$ is injective, $\sigma$
and $\delta$ can be extended to $Q(A)$ and we can consider the skew
polynomial ring $Q(A)[x; \sigma, \delta]$. In this case we have
$$
A[x; \sigma,\delta] \subseteq Q(A)[x; \sigma, \delta] \subseteq
Q(A)(x; \sigma,\delta)
$$
and so $A[x; \sigma, \delta]$ is a right Ore domain with field of
fractions $Q(A)(x; \sigma, \delta)$.

For an arbitrary endomorphism $\sigma$ of the skew field $K$ a
further construction will be considered. The ring of \emph{skew
power series} $K[[ x;\sigma]]$ is defined to be the set of power
series on $x$ of the form $\sum_{i=0}^{\infty} x^ia_i$, with $a_i\in
K$, where addition is done coefficientwise and multiplication is
given by the rule $ax=x\sigma(a)$, for $a\in K$. When $\sigma$ is an
automorphism, we can embed $K[[x;\sigma]]$ into the ring of
\emph{skew Laurent series} $K((x;\sigma))$ whose elements are series
$\sum_{-r}^{\infty}x^ia_i$, with $r$ a nonnegative integer. In
$K((x;\sigma))$, multiplication satisfies $ax^n=x^n\sigma^n(a)$, for
every integer $n$ and $a\in K$. It is well known that
$K((x;\sigma))$ is a skew field containing $K(x;\sigma)$.

As mentioned in \cite[Section~2.3]{pC95}, when $\delta\neq 0$ we
must look at a more general construction. Given an automorphism
$\sigma$ of a skew field $K$ and a $\sigma$-derivation $\delta$ on
$K$, denote by $R$ the ring of all power series
$\sum_{i=0}^{\infty}a_iy^i$ on $y$ with multiplication induced by
$ya = \sum_{i=0}^{\infty}\sigma\delta^i(a)y^{i+1}$, for $a\in K$. In
\cite[Theorem 2.3.1]{pC95} it is shown that $R$ is a domain and
$S=\{1, y, y^2,\dots\}$ is a left Ore set in $R$ whose localization
$S^{-1}R$ is a skew field, consisting of all skew Laurent series
$\sum_{i=r}^{\infty} a_iy^i$, with $r\in\mathbb{Z}$. Because in
$S^{-1}R$ we have $ya=\sigma(a)y+y\delta(a)y$, for all $a\in K$, it
follows that $ay^{-1}=y^{-1}\sigma(a)+\delta(a)$. Hence, the subring
generated by $y^{-1}$ in $S^{-1}R$ is an ordinary skew polynomial
ring on $y^{-1}$. Because of that we shall adopt the notation
$R=K[[x^{-1}; \sigma, \delta]]$ and $S^{-1}R=K((x^{-1}; \sigma,
\delta))$. So we have an embedding $K[x;\sigma,\delta]\subseteq K((
x^{-1}; \sigma, \delta))$, sending $x$ to $y^{-1}$ and, hence,
$K(x;\sigma,\delta)\subseteq K((x^{-1};\sigma,\delta))$. It is also
clear that $K((x^{-1};\sigma,\delta))=K[[x^{-1}; \sigma,
\delta]]+K[x;\sigma, \delta]$.

We now define a valuation function on $K((x^{-1};\sigma,\delta))$.

\begin{lemma} \label{le:valLaurent}
  Let $K$ be a skew field with an automorphism $\sigma$ and a $\sigma$-derivation
  $\delta$. Then the map $\omega\colon K((x^{-1};\sigma,\delta)\rightarrow\mathbb{Z}\cup \{\infty\}$
  defined by
  $$
  \omega(f)=\sup\{n : f\in K[[x^{-1}; \sigma, \delta]] y^n\},
  $$
  is a valuation on $K((x^{-1};\sigma,\delta))$.
\end{lemma}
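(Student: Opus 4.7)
The plan is to first make $\omega$ concrete. Every nonzero element $f \in S^{-1}R = K((x^{-1};\sigma,\delta))$ has a unique expression as a Laurent series $\sum_{i \geq r} a_i y^i$ with $a_i \in K$ and $a_r \neq 0$. I would show that $Ry^n$ is precisely the set of Laurent series whose support lies in $\{i : i\geq n\}$, which is immediate once one notes that a general element $g = \sum_{j \geq 0} c_j y^j$ of $R$ yields $gy^n = \sum_{j\geq 0} c_j y^{j+n}$. Granting this, $\omega(f)$ is exactly the smallest $i$ with $a_i \neq 0$, and the conditions $\omega(1)=0$ and $\omega(0)=\infty$ hold by definition.

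For the ultrametric inequality, the description above immediately shows $Ry^m \subseteq Ry^n$ whenever $n \leq m$. So if $\omega(f), \omega(g) \geq n$, then $f$ and $g$ both lie in $Ry^n$, hence so does $f+g$, giving $\omega(f+g) \geq \min\{\omega(f), \omega(g)\}$.

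The substantive step is multiplicativity. The idea is to isolate the leading term of a product. From $ya = \sum_{i \geq 0} \sigma\delta^i(a) y^{i+1}$, an easy induction shows that for every integer $n$ and every nonzero $b \in K$, one has $y^n b = \sigma^n(b) y^n + (\text{terms in strictly higher powers of } y)$. For $n>0$ this follows by iterating the defining relation; for $n<0$ I would use the dual relation $ay^{-1}=y^{-1}\sigma(a)+\delta(a)$, rewritten as $y^{-1}b = \sigma^{-1}(b)y^{-1} - \delta(\sigma^{-1}(b))$, and iterate. Writing nonzero $f = \sum_{i\geq r}a_i y^i$ and $g = \sum_{j\geq s}b_j y^j$ with $a_r, b_s \neq 0$, the lowest-order contribution to $fg$ comes from $a_r y^r \cdot b_s y^s = a_r\sigma^r(b_s)y^{r+s} + (\text{higher order})$, while every other product $a_i y^i b_j y^j$ contributes only to powers $\geq i+j > r+s$. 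Since $\sigma^r$ is an automorphism of $K$, $a_r\sigma^r(b_s)\neq 0$, and therefore $\omega(fg)=r+s=\omega(f)+\omega(g)$.

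The main obstacle is verifying the leading-term formula $y^n b = \sigma^n(b)y^n + (\text{higher order})$ uniformly for all $n \in \mathbb{Z}$, because the derivation produces tails of arbitrary length and because $S^{-1}R$ mixes positive and negative powers of $y$. Once that computation is secured, the valuation axioms drop out, and the ultrametric inequality and multiplicativity combine to give the claim.
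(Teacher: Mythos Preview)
Your proposal is correct and follows essentially the same route as the paper. The paper's proof is compressed into a single observation---that for every integer $n$ and $a\in K$ one can write $y^{n}a=\sigma^{n}(a)y^{n}+hy^{n+1}$ for some $h\in K[[x^{-1};\sigma,\delta]]$, using that $\sigma$ is invertible---which is exactly your leading-term formula; your write-up simply unpacks the negative-$n$ case and the coefficient bookkeeping more explicitly.
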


\begin{proof}
  The only nonobvious fact to be proved is that if $f$ and $g$ are
  nonzero elements of $K((x^{-1};\sigma,\delta))$, then $\omega(fg) = \omega(f)+
  \omega(g)$. This follows from the fact that, since $\sigma$ is
  invertible, it is possible to write, for
every integer $n$ and $a\in K$, $y^na=\sigma^n(a)y^n+hy^{n+1}$, for
some $h\in K[[x^{-1}; \sigma, \delta]]$.
\end{proof}


The field $K(x;\sigma,\delta)$ has a valuation ``at infinity'' $\nu$
which extends the valuation $-\deg$ on $K[x; \sigma, \delta]$, where $\deg$
denotes the usual degree function on $K[x;\sigma,\delta]$, that is,
$\deg(\sum x^ia_i) = \max\{i : a_i\ne 0\}$. Clearly,
$\omega|_{K(x; \sigma, \delta)}=\nu$. The following lemma shows
that $K((x^{-1};\sigma,\delta))$ is the completion of $K(x;\sigma,\delta)$.

\begin{lemma}\label{le:completepol}
  Let $K$ be a skew field with an automorphism $\sigma$ and a $\sigma$-derivation
  $\delta$. With respect to the valuation $\nu$ defined above,
  $K((x^{-1};\sigma,\delta))$ is the completion of $K(x;\sigma,\delta)$.
\end{lemma}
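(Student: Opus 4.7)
The plan is to establish three things: that the inclusion $K(x;\sigma,\delta)\subseteq K((x^{-1};\sigma,\delta))$ already noted in the text is isometric (so $\omega$ extends $\nu$), that $K((x^{-1};\sigma,\delta))$ is complete in the metric defined by $\omega$, and that $K(x;\sigma,\delta)$ is dense in $K((x^{-1};\sigma,\delta))$. Together these three facts identify $K((x^{-1};\sigma,\delta))$ with the completion of $K(x;\sigma,\delta)$.

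For the isometry, it is enough, since both $\omega$ and $\nu$ are valuations, to check $\omega|_{K[x;\sigma,\delta]}=-\deg$. Given $p=\sum_{i=0}^{n}x^{i}b_{i}\in K[x;\sigma,\delta]$ with $b_n\ne 0$, I rewrite each summand $x^{i}b_{i}=y^{-i}b_{i}$ inside $K((x^{-1};\sigma,\delta))$, moving the scalars past the powers of $y$ by iterating the identity $y^{-1}b=\sigma^{-1}(b)y^{-1}-\delta(\sigma^{-1}(b))$ derived from $ay^{-1}=y^{-1}\sigma(a)+\delta(a)$. The term of smallest $y$-degree produced comes from $y^{-n}b_{n}=\sigma^{-n}(b_{n})y^{-n}+(\text{higher powers of }y)$, whose leading coefficient is nonzero because $\sigma$ is an automorphism. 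Hence $\omega(p)=-n=-\deg(p)$, as required.

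For completeness, the argument is the same pattern as Lemma~\ref{le:completemns}, only easier since the index set is $\mathbf{Z}$. Given a Cauchy sequence $(u_{n})$ in $K((x^{-1};\sigma,\delta))$, I extract a subsequence $(u_{n_{k}})$ with $\omega(u_{n_{k+1}}-u_{n_{k}})>k$; write $u_{n_{k}}=\sum_{i\geq r_{k}}a_{i}^{(k)}y^{i}$. The Cauchy condition means $a_{i}^{(k)}$ stabilizes as $k\to\infty$ for each fixed $i$, and that the valuations $r_{k}$ are bounded below by some $r\in\mathbf{Z}$ once $k$ is large. Setting $a_{i}=\lim_{k}a_{i}^{(k)}$ and $u=\sum_{i\geq r}a_{i}y^{i}$ produces a well-defined element of $K((x^{-1};\sigma,\delta))$ to which $(u_{n_{k}})$ converges, so $(u_{n})$ converges as well.

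For density, every $f=\sum_{i\geq r}a_{i}y^{i}\in K((x^{-1};\sigma,\delta))$ is approximated by its truncations $f_{N}=\sum_{i=r}^{N}a_{i}y^{i}$, each of which lies in $K[y,y^{-1}]\subseteq K(x;\sigma,\delta)$ (recall $y=x^{-1}$), and $\omega(f-f_{N})\geq N+1\to\infty$. I expect the main care-point to be the isometry step, since one must actually use the commutation rule and the invertibility of $\sigma$ to rule out unexpected cancellations in the leading $y^{-n}$ coefficient when $\delta\neq 0$; completeness and density follow the familiar pattern.
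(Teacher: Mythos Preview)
Your proof is correct and follows essentially the same approach as the paper: density via partial sums (truncations) of the Laurent series, and completeness via the same Cauchy-sequence/coefficient-stabilization argument patterned on Lemma~\ref{le:completemns}. The only difference is that you also justify the isometry $\omega|_{K[x;\sigma,\delta]}=-\deg$, which the paper asserts without proof just before the lemma (``Clearly, $\omega|_{K(x;\sigma,\delta)}=\nu$'').
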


\begin{proof}
  That $K(x;\sigma,\delta)$ is dense in $K((x^{-1};\sigma,\delta))$
  follows from the fact that every Laurent series $\sum_{i=n}^{\infty}a_iy^i$
  in $K((x^{-1};\sigma,\delta))$ is the limit of its partial sums, which are
  elements in $K(x;\sigma,\delta)$.
  It remains to prove that $K((x^{-1}; \sigma, \delta))$ is complete. Let
  $(u_n)$ be a Cauchy sequence in $K((x^{-1}; \sigma, \delta))$, say
  $$
  u_n=\sum_{i\geq m_n}a_i^{(n)}y^i,
  $$
  with $a_i^{(n)}\in K$ and $m_n\in\mathbf{Z}$. We proceed as in the proof
  of Lemma~\ref{le:completemns}. For each $k\in \mathbf{Z}$ choose nonnegative
  integers $n_k\in \mathbf{N}$ satisfying $n_0 < n_1 < n_2 < \dots$
  such that $\omega(u_p-u_q)>k$, for all $p,q\geq n_k$. Consider the following
  element of $K((x^{-1};\sigma,\delta))$,
  $$
  u=\sum_{i<0}a_i^{(n_0)}y^i+\sum_{i\geq 0}a_i^{(n_i)}y^i.
  $$
  We shall prove that $(u_n)$ converges to $u$. Indeed, for all $k$,
  $\omega(u_{n_k}-u)\geq k+1$ and, hence, the subsequence $(u_{n_k})$
  of $(u_n)$ converges to $u$. Since $(u_n)$ is a Cauchy sequence,
  $(u_n)$ converges to $u$. It follows that $K((x^{-1};\sigma,\delta))$
  is complete.
\end{proof}

In the case that $\delta=0$, $K(x; \sigma)$ has an $x$-adic valuation
$\eta$, defined by $\eta(fg^{-1})=o(f)-o(g)$, for all $f,g\in K[x;\sigma]$, $g\ne 0$,
where $o(\sum x^ia_i)=\min\{i : a_i\neq 0\}$ is the order function on $K[x;\sigma]$,
which is a valuation. Denote by $\zeta$ the integer valued function on $K((x;\sigma))$
defined by $\zeta(h)=\sup\{n : h\in x^n K[[ x;\sigma]]\}$, for all $h\in K((x;\sigma))$,
$h\ne 0$. It is easy to see that $\zeta$ is a valuation on $K((x;\sigma))$ which
coincides with $\eta$ on $K(x;\sigma)$. With a proof similar to the one of
Lemma~\ref{le:completepol} it can be shown that $K(( x; \sigma))$ is the completion of
$K(x; \sigma)$ with respect to $\eta$.

Our next aim is to guarantee the existence of free fields
in completions of skew rational function fields. For that,
we recall the following definition. An automorphism $\sigma$
in a ring is said to have \emph{inner order} $r$ if $\sigma^r$ is
the least positive power of $\sigma$ which is inner. If $\sigma^r$
it is not inner for any $r>0$, then $\sigma$ is
said to have infinite inner order. We shall need the following
result.

\begin{lemma}[{\cite[Theorem~2.2.10]{pC95}}]\label{le:Laurentcentre}
  Let $K$ be a skew field with centre $C$, let $\sigma$ be an automorphism
  of $D$ and let $\delta$ be a $\sigma$-derivation. If $K[x;
  \sigma, \delta]$ is simple or $\sigma$ has infinite inner order,
  then $Z(K(x; \sigma, \delta))=\{c\in C : \sigma(c)=c
  \text{ and } \delta(c)=0\}$.\hfill\qed
\end{lemma}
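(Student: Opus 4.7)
The inclusion $\supseteq$ is immediate: for $c \in C$ with $\sigma(c) = c$ and $\delta(c) = 0$, the defining relation gives $cx = x\sigma(c) + \delta(c) = xc$, so $c$ commutes with $x$ and, being central in $K$, with every element of $K[x;\sigma,\delta]$; hence $c \in Z(K(x;\sigma,\delta))$.

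For the reverse inclusion, fix $z \in Z(K(x;\sigma,\delta))$ and set $R = K[x;\sigma,\delta]$. The first step is to show $z \in R$. Consider the set
$$I = \{q \in R : zq \in R\}.$$
Because $K(x;\sigma,\delta)$ is the right Ore quotient of $R$, $I$ is a nonzero right ideal; since $z$ is central, $z(rq) = r(zq) \in R$ for all $r \in R$ and $q \in I$, so $I$ is a two-sided ideal. Under the simplicity hypothesis $I = R$, so $z = z \cdot 1 \in R$. Under the infinite-inner-order hypothesis, simplicity is unavailable; here I would embed $K(x;\sigma,\delta)$ into its completion $K((x^{-1};\sigma,\delta))$ of Lemma~\ref{le:completepol}, write $z$ as a formal series $\sum_{i \leq N} x^i c_i$, and use the identities $az = za$ for $a \in K^{\times}$ to produce a descending recursion on the $c_i$; the infinite inner order of $\sigma$ prevents the recursion from admitting nonzero terms in negative degree, forcing $z \in R$.

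Once $z = \sum_{i=0}^{n} x^i c_i \in R$ with $c_n \neq 0$, I would extract constraints from $az = za$ (for $a \in K$) and $xz = zx$. Using $ax^n = x^n\sigma^n(a) + (\text{lower order in } x)$, comparison of the $x^n$-coefficient in $az$ and $za$ yields $\sigma^n(a) c_n = c_n a$ for every $a \in K$, so $\sigma^n$ is the inner automorphism induced by $c_n$. In the infinite-inner-order case this instantly gives $n = 0$. In the simplicity case I would combine the lower-degree comparisons, which read $\delta(c_{i+1}) = c_i - \sigma(c_i) + (\text{terms involving } \delta^{k}(a))$, with the fact that any $\sigma^n$ inner by $c_n$ with $n \geq 1$ produces a nonzero element of $R$ generating a proper two-sided ideal, contradicting simplicity; this again forces $n = 0$. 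With $z = c_0 \in K$, commutation with $K$ gives $c_0 \in C$, and the relation $c_0 x = x c_0 = x\sigma(c_0) + \delta(c_0)$ yields $\sigma(c_0) = c_0$ and $\delta(c_0) = 0$.

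The main obstacle is the reduction $z \in R$ in the infinite-inner-order case, where the clean ideal-theoretic collapse available under simplicity is lost and one must genuinely exploit the structure of $\sigma$ through a recursion in the completion, keeping careful track of the lower-order correction terms produced by $\delta$. An alternative route would be to write $z = q^{-1}p$ with $q$ of minimal degree and argue that any nontrivial $q$ would impose a relation forcing some positive power of $\sigma$ to be inner; I would expect the bookkeeping to be the most delicate part of the proof.
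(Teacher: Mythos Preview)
The paper does not prove this lemma: the statement is quoted as Theorem~2.2.10 of Cohn's \textit{Skew Fields} and is closed with \verb|\hfill\qed| immediately after the enunciation, with no accompanying argument. There is therefore no proof in the paper to compare your proposal against.

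As a brief comment on the sketch itself: your argument in the simple case is essentially sound, but the step forcing $\deg z=0$ is overcomplicated. Once you know $z\in R$ is central of degree $n\ge 1$, the principal ideal $zR$ is already two-sided (because $z$ is central) and proper (every element has degree $\ge n$), contradicting simplicity directly; there is no need to invoke inner automorphisms at that point. In the infinite-inner-order case your outline is plausible but the part you flag as delicate --- reducing to $z\in R$ via the completion --- is indeed where the work lies; the cleaner classical route (as in Cohn) writes $z=pq^{-1}$ with $p,q\in R$ and uses centrality of $z$ together with the Euclidean/degree structure of $R$ to control $\deg q$, which is close to the alternative you mention at the end.
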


We have the following result.

\begin{theorem}\label{th:ffpol}
  Let $K$ be a skew field with centre $C$, let $\sigma$ be an automorphism
  of $K$ and let $\delta$ be a $\sigma$-derivation of $K$. Suppose that
  $K[x; \sigma, \delta]$ is simple or that $\sigma$ has infinite inner order.
  If $\delta\ne 0$, let $R=K((x^{-1}; \sigma, \delta))$, otherwise, let
  $R=K(( x; \sigma))$. If the field $C_0=\{c\in C : \sigma(c)=c
  \text{ and } \delta(c)=0\}$ is infinite, then $R$ contains a free field
  $\ff{C_0}{}{X}$ on a countable set $X$.
\end{theorem}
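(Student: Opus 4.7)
The plan is to apply Theorem \ref{th:freefield} to the skew field $D=K(x;\sigma,\delta)$ (in the case $\delta\neq 0$) or $D=K(x;\sigma)$ (in the case $\delta=0$), whose completion with respect to an appropriate discrete valuation is exactly the ring $R$ in the statement. So I need to verify the three hypotheses of Theorem \ref{th:freefield}: that $D$ has infinite centre, that $D$ is infinite-dimensional over its centre, and that $D$ carries a discrete valuation whose completion is $R$.

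The centre is handled directly by Lemma \ref{le:Laurentcentre}: under the hypothesis that $K[x;\sigma,\delta]$ is simple or that $\sigma$ has infinite inner order, $Z(D)=C_0$, which is infinite by assumption. The infinite-dimensionality of $D$ over $C_0$ is immediate from the fact that the powers $1,x,x^2,\dots$ are left linearly independent over $K$ in $K[x;\sigma,\delta]\subseteq D$, and hence also linearly independent over $C_0\subseteq K$. For the valuation, when $\delta\neq 0$ I would use the valuation $\nu$ ``at infinity'' on $K(x;\sigma,\delta)$ (extending $-\deg$ on $K[x;\sigma,\delta]$); Lemma \ref{le:completepol} then identifies the completion with $K((x^{-1};\sigma,\delta))=R$. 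When $\delta=0$ I would use instead the $x$-adic valuation $\eta$ on $K(x;\sigma)$, whose completion is $K((x;\sigma))=R$ by the remark following Lemma \ref{le:completepol}. Both $\nu$ and $\eta$ are discrete since their value groups are $\mathbf{Z}$.

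With the three hypotheses verified, Theorem \ref{th:freefield} produces a free field $\ff{C_0}{}{X}$ on a countable set $X$ inside the completion $R$, which is what we want.

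I do not expect any genuine obstacle here: the theorem is essentially a packaging of the earlier lemmas. The only slightly delicate point is making sure the correct valuation is chosen in each of the two cases (the ``at infinity'' valuation when $\delta\neq 0$, since the $x$-adic valuation is not well behaved in the presence of a nonzero derivation, versus the $x$-adic valuation when $\delta=0$), and citing the matching completion lemma in each case.
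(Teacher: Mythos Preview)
Your proposal is correct and follows essentially the same approach as the paper: identify the centre of $K(x;\sigma,\delta)$ as $C_0$ via Lemma~\ref{le:Laurentcentre}, observe that the powers of $x$ give infinite dimension over $C_0$, and then apply Theorem~\ref{th:freefield} using the valuation ``at infinity'' (completion given by Lemma~\ref{le:completepol}) when $\delta\neq 0$ and the $x$-adic valuation when $\delta=0$. The paper's proof is slightly terser but the content is the same.
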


\begin{proof}
  Since, by Lemma~\ref{le:Laurentcentre}, $Z(K(x; \sigma, \delta))=C_0$
  and $\{x^i : i\in\mathbf{Z}\}$ is linearly independent over $C_0$,
  because it is over $K$, it follows that $K(x;\sigma,\delta)$ is
  infinite dimensional over its infinite centre $C_0$. By
  Theorem \ref{th:freefield}, $R$ contains a free field on
  a countable set over $C_0$, for, as we have seen above,
  it is the completion of $K(x; \sigma, \delta)$ with respect
  to appropriate discrete valuations.
\end{proof}

We remark that the case $\delta=0$ in the above theorem had already
been considered in \cite[Theorem~4]{kC03} for a commutative field $K$
of coefficients.




An example of a skew polynomial ring with a nonzero derivation is the
first Weyl algebra over a skew field $K$,
$$
A_1(K)=\fr{K}{}{x_1,x_2 : x_1x_2-x_2x_1=1}=K[x_1]\left[x_2;I,\frac{d}{dx_1}\right],
$$
where $I$ above stands for the identity automorphism of $K[x_1]$.
It is well known that $A_1(K)$ is an Ore domain with field of
fractions $Q_1(A)=K(x_1)(x_2;I,\frac{d}{dx_1})$, called the Weyl field.

\begin{corollary}
  Let $K$ be a skew field with centre $C$. If $\charac K=0$ then
  the field of skew Laurent series $K(x_1)((x_2^{-1};I,\frac{d}{dx_1}))$ contains a free
  field $\ff{C}{}{X}$, where $X$ is a countable set.
\end{corollary}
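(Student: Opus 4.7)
The plan is to apply Theorem~\ref{th:ffpol} with $L = K(x_1)$ as the base skew field, $x_2$ in the role of $x$, $\sigma = I_L$ the identity automorphism of $L$, and $\delta = d/dx_1$ the usual derivation (which extends uniquely from $K[x_1]$ to its Ore localization $K(x_1)$). Since $\delta \ne 0$, the theorem's conclusion applies to the completion $R = L((x_2^{-1}; I, \delta))$, which is precisely the ring $K(x_1)((x_2^{-1}; I, d/dx_1))$ in the statement.

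Two hypotheses must be checked. Because $\sigma = I_L$ has inner order $1$, not infinite, I must establish that the differential polynomial ring $L[x_2; I, d/dx_1]$ is simple. Since $L$ is a skew field, it has no nontrivial ideals at all, so it is trivially $\delta$-simple, and the classical simplicity criterion for a differential polynomial ring over a skew field of characteristic zero reduces to showing that $\delta$ is not inner, i.e., not of the form $g \mapsto ag - ga$ for some $a \in L$. If such an $a$ existed, evaluating at $g = x_1$ would give $1 = \delta(x_1) = a x_1 - x_1 a$; but $x_1$ is central in $K(x_1)$ (it commutes with $K$ by construction and with itself), so the right-hand side is $0$. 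This contradiction yields simplicity.

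For the second hypothesis, I must identify the fixed field
\[
  C_0 = \{\,c \in Z(L) : \sigma(c) = c \text{ and } \delta(c) = 0\,\}
\]
and verify that it is infinite. Because $x_1$ is central in $K[x_1]$ (hence in $K(x_1)$), one has $Z(L) = C(x_1)$, and since $\sigma = I$ imposes no restriction, $C_0 = \{f \in C(x_1) : df/dx_1 = 0\}$. In characteristic zero this kernel is exactly $C$, and the hypothesis $\charac K = 0$ forces $\mathbb{Q} \subseteq C$, so $C_0 = C$ is infinite. Theorem~\ref{th:ffpol} then produces a countable set $X$ such that $\ff{C}{}{X} \subseteq K(x_1)((x_2^{-1}; I, d/dx_1))$, which is the claim.

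The main obstacle is the simplicity verification; this is a classical result about Ore extensions of skew fields by outer derivations in characteristic zero, but one must take some care to cite or briefly reprove it in the noncommutative setting (the commutative case, where it underlies the Weyl algebra's simplicity, is standard). The remaining work — extending $d/dx_1$ to $K(x_1)$, computing $Z(K(x_1)) = C(x_1)$, and observing that $\charac K = 0$ makes $C$ infinite — is routine.
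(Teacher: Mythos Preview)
Your proposal is correct and follows essentially the same route as the paper: both apply Theorem~\ref{th:ffpol} to $K(x_1)[x_2;I,d/dx_1]$, verify simplicity by observing that $d/dx_1$ is not inner (the paper cites \cite[Corollary~3.16]{tL91} for the simplicity criterion, where you invoke the same classical result), and then compute $C_0 = \{f\in C(x_1) : df/dx_1 = 0\} = C$, which is infinite since $\charac K = 0$. Your explicit check that $d/dx_1$ cannot be inner (because $x_1$ is central) is a detail the paper leaves as ``clearly''.
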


\begin{proof}
  Let $B=K(x_1)[x_2;I,\frac{d}{dx}]$. Then $A_1(K)\subseteq B\subseteq
  K(x_1)(x_2; I,\frac{d}{dx_1})$ and $Q(B)=Q_1(A)=K(x_1)(x_2;I,\frac{d}{dx_1})$.
  Clearly, $\frac{d}{dx_1}$ is not an inner
  derivation. It follows from \cite[Corollary 3.16]{tL91} that $B$ is
  a simple ring. Since $\charac K=0$, we have $Z(K(x_1))_0=\{f\in
  Z(K(x_1)) : \frac{d}{dx_1}(f)=0\}=C(x_1)_0=C$, an infinite field.
  By Theorem \ref{th:ffpol}, the field of
  skew Laurent series $K(x_1)((x_2^{-1}; I,\frac{d}{dx_1}))$
  contains a free field on a countable set over $C$.
\end{proof}



Our next example is a division ring generated by quantum matrices.
Let $k$ be a commutative field and let $q\in k$ be a nonzero element.
The algebra $M_q(2)$ of quantum $2\times 2$ matrices
over $k$ is defined to be the $k$-algebra with four generators $a, b, c, d$ subjected
to the following relations:
\begin{align*}
  ab&=qba, &  ac&=qca, &  bc&=cb,\\
  bd&=qdb,  &  cd&=qdc, &  ad-da&=(q-q^{-1})cb.
\end{align*}
It can also be regarded as an iterated skew polynomial ring $k[a][b;
\sigma_b][c; \sigma_c][d; \sigma_d, \delta_d]$, where $\sigma_b$, $\sigma_c$ and
$\sigma_d$ are the
$k$-automorphisms satisfying
\begin{align*}
  &\sigma_b(a)=qa,\\
  &\sigma_c(b)=b, \quad \sigma_c(a)=qa,\\
  &\sigma_d(c)=qc, \quad \sigma_d(b)=qb, \quad \sigma_d(a)=a,
\end{align*}
and $\delta_d$ is the $\sigma_d$-derivation such that
$$
  \delta_d(a) = (q-q^{-1})cb,\quad \delta_d(b)=0,\quad \delta_d(c)=0.
$$
(See, for instance, \cite{cK95}.) It follows that $M_q(2)$ is a noetherian
domain with field of fractions $Q_q(2)$. For simplicity we shall write $R_a=k[a]$, $R_b=R_a[b;
\sigma_b]$, $R_c=R_b[c; \sigma_c]$. The fact that the field of fractions $Q_q(2)$ of
$M_q(2)=R_c[d;\sigma_d,\delta_d]$ contains a free algebra follows from the
following theorem.

\begin{theorem}[{\cite[Theorem]{mL86}}]\label{le:theoremlorenz}
  Let $K=k(t)$ be the field of rational functions over a commutative
  field $k$ and let $\sigma$ a $k$-automorphism of $K$ of infinite
  order. Then $K(x; \sigma)$ contains a noncommutative free
  $k$-subalgebra.\hfill\qed
\end{theorem}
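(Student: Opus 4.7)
The plan is to construct two explicit elements $u,v \in K(x;\sigma)$ which freely generate a noncommutative $k$-subalgebra. Since $\sigma$ is a $k$-automorphism of $k(t)$ of infinite order, it corresponds to an infinite-order element of $\mathrm{PGL}_2(k)$. Analysing the fixed points of $\sigma$ on the projective line over the algebraic closure $\bar k$, and if necessary making a $k$-linear change of variable (and a finite base change, to be descended at the end), one reduces to two model cases: a \emph{hyperbolic} one with $\sigma(t)=\lambda t$ for some $\lambda\in k^\times$ which is not a root of unity, and a \emph{parabolic} one with $\sigma(t)=t+1$.

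The obvious first attempt $u=x$, $v=tx$ already fails in both model cases. In the hyperbolic case a direct calculation gives
\begin{equation*}
uvvu \;=\; \lambda^3 t^2\, x^4 \;=\; vuuv,
\end{equation*}
which is the nontrivial relation $uvvu-vuuv=0$; in the parabolic case one has $u^2-uv+vu=(1-(t+1)+t)x^2=0$. One must therefore take $u,v$ of a more delicate form, for instance $u=p(t)x+q(t)$, $v=r(t)x+s(t)$ with $p,q,r,s\in k(t)$ chosen so that the $\sigma$-shifts occurring in arbitrary monomial products cannot conspire to produce any $k$-linear relation. Freeness should then be verified using the $x$-degree filtration on $K(x;\sigma)$: any monomial of length $n$ in $u,v$ has an $x^n$ leading term whose coefficient is a specific product of $\sigma^{0}(\cdot),\sigma^{1}(\cdot),\dots,\sigma^{n-1}(\cdot)$ applied to the chosen $p$'s and $r$'s, and a triangularity argument reduces the absence of $k$-linear relations to the statement that these top coefficients are $k$-linearly independent for every $n$.

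The main obstacle is precisely this combinatorial linear-independence check, uniformly in $n$, and the choice of $p,q,r,s$ that makes it work in both model cases simultaneously. The hyperbolic case admits a relatively direct treatment via pole-divisor analysis on $\mathbb{P}^1_{\bar k}$, using that the $\sigma$-orbit of a generic rational function is infinite because $\lambda$ has infinite multiplicative order; the parabolic case, where the $\sigma$-shifts of any single rational function are translates of one another and therefore much more prone to collapse, is subtler and is the technical heart of the theorem. Once the linear-independence is established for all $n$, the free subalgebra, constructed a priori over some finite extension of $k$, descends to $k$ by a routine invariants argument applied to the Galois action permuting the chosen generators.
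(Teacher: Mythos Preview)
The paper does not prove this theorem at all: it is quoted verbatim from Lorenz's 1986 paper and closed with a bare \qed. So there is no ``paper's own proof'' to compare against; the only question is whether your proposal stands on its own.

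It does not, and you say so yourself. What you have written is a strategy, not a proof: you reduce (modulo base change) to two model automorphisms, observe that the naive choice $u=x$, $v=tx$ fails, propose to replace it by unspecified $u=p(t)x+q(t)$, $v=r(t)x+s(t)$, and then state that the ``main obstacle'' is the linear-independence check on the $x^n$-leading coefficients, which you describe as ``the technical heart of the theorem'' without carrying it out. That is exactly the content of Lorenz's argument, and until you produce concrete $p,q,r,s$ and verify the independence for all $n$, nothing has been proved.

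Two further points where the sketch is shakier than it looks. First, the parabolic model $\sigma(t)=t+1$ has order $p$ in characteristic $p$, so it cannot occur there; your case split needs to say this, and to argue that in positive characteristic every infinite-order element of $\mathrm{PGL}_2$ is (after base change) diagonalizable with eigenvalue ratio of infinite order. Second, the closing ``routine invariants argument'' for descent from a finite extension $k'$ back to $k$ is not routine in this form: a free $k'$-subalgebra of $k'(t)(x;\sigma)$ need not intersect $k(t)(x;\sigma)$ in a free $k$-subalgebra, and a Galois action ``permuting the chosen generators'' does not come for free. You would need either to choose your generators $k$-rationally from the start, or to invoke a genuine descent/specialization lemma for free subalgebras; either way this step requires an actual argument.
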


\begin{corollary}
  Let $k$ be a commutative field and let $q\in k^{\times}$. If $q$ is not a root of 1, then
  the field of fractions of the $k$-algebra of $2\times 2$ quantum matrices contains a
  noncommutative free algebra over $k$ and can be embedded in a division ring
  containing a noncommutative free field.
\end{corollary}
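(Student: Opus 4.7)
The plan is to treat the two assertions separately. For the first, the subring of $M_q(2)$ generated by $a$ and $b$ is the quantum plane, which in the paper's notation is the Ore extension $k[b][a;\sigma]$ with $\sigma \in \Aut(k[b])$ the $k$-automorphism $b \mapsto q^{-1}b$; since $q$ is not a root of unity, $\sigma$ has infinite order, so its field of fractions $k(b)(a;\sigma) \subseteq Q_q(2)$ contains a noncommutative free $k$-subalgebra by Lorenz's Theorem~\ref{le:theoremlorenz}.

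For the embedding, the key structural fact is that the quantum determinant $D := ad - qbc$ is central in $M_q(2)$, which one verifies directly by computing $Dx - xD$ for each $x \in \{a,b,c,d\}$ from the defining relations. Setting $K := Q(R_c)$, the identity $d = Da^{-1} + qa^{-1}bc$ shows $d$ lies in the polynomial ring $K[D]$ in the central indeterminate $D$ over $K$; hence $K[d;\sigma_d,\delta_d] = K[D]$ as subrings of $Q_q(2)$, and so $Q_q(2) = K(D)$. Now $K = k(b,c)(a;\tau)$ with $\tau(b) = q^{-1}b$ and $\tau(c) = q^{-1}c$ of infinite order, so by Lemma~\ref{le:Laurentcentre} the centre of $K$ is $k(bc^{-1})$; consequently $Z(Q_q(2)) = k(bc^{-1}, D)$ is infinite, and $[Q_q(2) : Z(Q_q(2))] = [K : k(bc^{-1})]$ is infinite. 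The $D$-adic valuation on $K[D]$ extends to a discrete valuation on $Q_q(2) = K(D)$ whose completion is the division ring $K((D))$ of formal Laurent series, and Theorem~\ref{th:freefield} supplies a free field inside $K((D))$; the embedding $Q_q(2) \hookrightarrow K((D))$ then completes the proof.

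The main obstacle is identifying the correct structural presentation of $Q_q(2)$. A direct attempt to apply Theorem~\ref{th:ffpol} to $K[d;\sigma_d,\delta_d]$ fails on both hypotheses: $\sigma_d$ is actually inner in $K$ (it coincides with conjugation by $a$, since $aba^{-1} = qb$, $aca^{-1} = qc$, and $aaa^{-1} = a$), violating the infinite-inner-order condition, while $K[d;\sigma_d,\delta_d]$ is not simple because the central element $D$ generates a proper ideal. Exploiting the centrality of the quantum determinant to re-present $Q_q(2)$ as a central-variable rational function field over $K$ bypasses both obstructions and reduces the problem to the generic situation covered by Theorem~\ref{th:freefield}.
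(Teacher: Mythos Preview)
Your argument is correct, but it takes a different route from the paper's. For the free-algebra part the two proofs are essentially the same (Lorenz's theorem applied to the subfield generated by $a$ and $b$). For the embedding, however, the paper does not pass through the quantum determinant at all: it simply observes that $Q_q(2)$, containing a free algebra, is infinite-dimensional over its centre $Z$, that $Z\supseteq k$ is infinite because $q$ not a root of unity forces $k$ to be infinite, and then applies Theorem~\ref{th:freefield} directly with the valuation ``at infinity'' in $d$ (sending $d\mapsto -1$), whose completion is the skew Laurent series field $Q(R_c)((d^{-1};\sigma_d,\delta_d))$ by Lemma~\ref{le:completepol}. Your detour through Theorem~\ref{th:ffpol} is unnecessary: the paper never invokes that theorem here, so the obstruction you carefully analyse (that $\sigma_d$ is inner on $K$ and that $K[d;\sigma_d,\delta_d]$ is not simple) never arises.

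That said, your approach has its own merits. By recognising $D=ad-qbc$ as central and rewriting $Q_q(2)=K(D)$ as a \emph{central} rational function field, you obtain a structurally simpler completion $K((D))$ (ordinary Laurent series rather than skew ones), and along the way you compute the centre $Z(Q_q(2))=k(bc^{-1},D)$ explicitly, which the paper does not. One small point: the asserted equality $[Q_q(2):Z(Q_q(2))]=[K:k(bc^{-1})]$ is more than you need and more than you have proved; what follows from $K\cap Z(K)(D)=Z(K)$ is the inequality $[K(D):Z(K)(D)]\geq[K:Z(K)]$, which already gives infinite dimension and suffices for Theorem~\ref{th:freefield}.
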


\begin{proof}
  We adhere to the notation above and let $Q_q(2)$ denote
  the field of fractions of $M_q(2)$.
  Since $q$ is not a root of 1, the extension of $\sigma_b$ to $k(a)$
  is an automorphism of infinite order. By Theorem~\ref{le:theoremlorenz},
  $Q(R_b)=k(a)(b; \sigma_b)$ contains a noncommutative free
  $k$-subalgebra. Since $Q(R_b)$ is a subfield of $Q_q(2)$, it follows
  that $Q_q(2)$ contains a noncommutative free $k$-subalgebra and
  has infinite dimension over its centre $Z$,
  which is infinite because $q$ is not a root of 1. The completion of
  $Q_q(2)$ with respect to the valuation sending $d$ to $-1$ is
  the field of skew Laurent series $Q(R_c)((d^{-1};
  \sigma_d, \delta_d))$, which, by Theorem~\ref{th:freefield},
  contains a free field $\ff{Z}{}{X}$ on a countable set $X$.
\end{proof}

An application of Theorem \ref{th:ffpol} with a nonidentity
automorphism is the following. Let $k$ be a commutative field and
let $\lambda$ be a nonzero element in $k$. Let $B_{\lambda}$ be the
$k$-algebra generated by two elements $x$, $y$, together with their
inverses $x^{-1}$, $y^{-1}$, subjected to the relation $xy=\lambda
yx$. This algebra has been considered by Lorenz in \cite{mL84}. It
can be viewed as a factor $B_{\lambda}=kG/{\langle
(a,b)-\lambda\rangle}$ of the group algebra $kG$, where $G=\langle
a, \ b: (a,(a,b))=(b,(a,b))=1\rangle$ is the free nilpotent group of
class 2 on 2 generators, or as the localization of an Ore extension,
$B_{\lambda}=k[x, x^{-1}][y, y^{-1}; \sigma]$, where $\sigma$ is the
$k$-automorphism of $k[x]$ given by $\sigma(x)=\lambda x$. It
follows from this last characterization that $k[x][y;
\sigma]\subseteq B_{\lambda}\subseteq k(x)(y; \sigma)$ and hence
$B_{\lambda}$ is an Ore domain with field of fractions
$Q_{\lambda}=k(x)(y;\sigma)$. If $\lambda$ is not a root of unity
then $\sigma$ has infinite order and, by
Lemma~\ref{le:Laurentcentre}, $Z(Q_{\lambda})=\{r\in k(x) :
\sigma(r)=r\}=k$, which is an infinite field. Moreover, in this case,
$Q_{\lambda}$ is known to contain a noncommutative free $k$-algebra. It
follows that it is infinite dimensional over its centre. The order function $o$
on $k[x][y; \sigma]$ such that $o(y)=1$ is a valuation. It extends
to a valuation on $B_{\lambda}$ and, consequently, to one on
$Q_{\lambda}$. So, Theorem~\ref{th:ffpol} implies that with respect
to this valuation the completion $\widehat{Q_{\lambda}}=k(x)((
y;\sigma))$ contains a free field $\ff{k}{}{X}$, where $X$ is a
countable set. This example should be compared with the case
considered in the Section~\ref{sec:mn}, where it was proved
that the classic field of fractions of $kG$, for that same group
$G$, has a valuation with a completion containing a free field.

\section{A class of rings with $t$-adic valuation}\label{sec:adicval}

Let $R$ be a ring with a regular central element $t$ such that
$\bigcap{t^nR}=0$ and $R/tR$ is right Ore. In \cite{aL95},
Lichtman proved that $R$ can be embedded in a division ring $D$.
We will show that in some cases such a division ring $D$ contains
a free field. As a consequence we show that a free field can
be embedded in the completion of a field of fractions of the universal
enveloping algebra of a Lie algebra in characteristic zero.

We start by stating the main result of \cite{aL95}.

\begin{theorem}[{\cite[Theorem~1]{aL95}}]\label{th:valLicht}
  Let $R$ be a ring and let $t$ be a regular central element of $R$. Assume
  that $\bigcap{t^nR}=0$ and that $R/tR$ is a right Ore domain. Then
  $R$ can be embedded in a division ring $D$, the $t$-adic valuation
  $\nu_t$ of $R$ is extended to a valuation $\nu$ of $D$ and the
  subset $R(R^*)^{-1}$ is dense in $D$. Moreover, if $S$ is the
  valuation ring of $\nu$, then $S/tS$ is isomorphic to $\Delta$,
  the field of fractions of $R/tR$.\hfill\qed
\end{theorem}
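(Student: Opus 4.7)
The plan is to work inside the $t$-adic completion $\hat R$ of $R$. First I would check that $\nu_t$ is a genuine valuation on $R$: the multiplicativity $\nu_t(xy) = \nu_t(x) + \nu_t(y)$ follows by identifying the associated graded ring $\bigoplus_{n\geq 0} t^n R / t^{n+1} R$ with the polynomial ring $(R/tR)[T]$ — using that $t$ is central and regular — and this is a domain since $R/tR$ is. In particular $R$ itself is a domain, and $\nu_t$ extends by continuity to a valuation $\hat\nu$ on the $t$-adic completion $\hat R$; after fixing a set-theoretic section of the projection $R\to R/tR$, every element of $\hat R$ admits a unique expansion $\sum_{n\geq 0} r_n t^n$ as a convergent series.

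The heart of the argument, and the main obstacle, is the construction of the division ring $D$. My approach would be to let $\Sigma$ denote the set of square matrices over $\hat R$ whose reductions modulo $t$ are invertible over $\Delta$, and then perform the universal $\Sigma$-inverting localization $D = \Sigma^{-1}\hat R$ in the sense of Cohn. The key lemma to prove is a Hensel-type statement: any $a\in\hat R$ with $\hat\nu(a)=0$ whose reduction is a unit of $\Delta$ becomes invertible in $D$. Granted this, an arbitrary nonzero element of $D$ can be written as $u\cdot t^n$ for such a Hensel-unit $u$ (using centrality of $t$), so multiplicativity of the extended valuation $\nu$ and invertibility of every nonzero element of $D$ follow simultaneously. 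The delicate point is to verify that $\Sigma$ really is a set of full matrices and that the universal localization does not collapse — this uses the domain structure of the associated graded together with Cohn's criterion for $\Sigma$-inverting maps.

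With $D$ so constructed, the density of $R(R^*)^{-1}$ in $D$ reduces to the density of $R$ in $\hat R$: given $d\in D$, write $d=s^{-1}r$ with $r,s\in\hat R$, approximate both by elements of $R$ in the $t$-adic topology, and use continuity of multiplication and of inversion of Hensel-units. For the last assertion, let $S=\{a\in D:\nu(a)\geq 0\}$ be the valuation ring, and define $S\to\Delta$ by $a\mapsto\bar a$ using the Hensel-unit expansion; surjectivity is essentially tautological, since every element of $\Delta$ has the form $\overline{u^{-1}v}$ for $u,v\in R$ with $u\notin tR$, and the kernel is exactly $tS$ because $\nu(a)\geq 1$ if and only if $a\in tS$.
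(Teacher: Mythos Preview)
The paper does not itself prove this statement --- it is quoted from \cite{aL95} and closed with a \qed\ --- so there is no in-paper argument to compare against; the paragraph following the theorem only summarises features of Lichtman's construction, in particular that $D$ is obtained as $S[t^{-1}]$ from the valuation ring $S$.

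Your outline has a genuine gap. The ring $\Sigma^{-1}\hat R$ is \emph{not} a division ring: since $t$ reduces to $0$, the $1\times 1$ matrix $(t)$ does not lie in $\Sigma$, so $t$ is never made invertible. What your localisation actually yields (granting it does not collapse) is the valuation ring $S$ --- a local ring with maximal ideal $tS$ and residue field $\Delta$ --- not the field $D$. Your claim that ``invertibility of every nonzero element of $D$ follow[s]'' from the decomposition $u\,t^n$ tacitly assumes $t^{-1}\in D$, which your construction does not supply. Cohn's matrix localisation is in any case an unnecessary detour: the square matrices in $\Sigma$ add nothing beyond the $1\times 1$ case, because once every $c\in\hat R$ with $\hat\nu(c)=0$ is inverted the resulting ring is already local with residue field $\Delta$, and every matrix in $\Sigma$ is then automatically invertible there. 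The cleaner route --- and the one underlying the summary after the theorem --- is to show that $C=\{c\in\hat R:\hat\nu(c)=0\}$ is a right Ore set in $\hat R$ (this is where completeness and the Ore hypothesis on $R/tR$ are actually used, via successive approximation modulo powers of~$t$), form the Ore localisation $S=\hat R\,C^{-1}$, and then set $D=S[t^{-1}]$. That also repairs your density step, which presupposes that every $d\in D$ is a fraction $s^{-1}r$ with $r,s\in\hat R$ --- true for an Ore localisation, but not a priori for a universal $\Sigma$-inverting ring.
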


In its proof it is shown that $t$ is a central
element of $D$, that $S$ is a local ring with maximal ideal $tS$ and
that $tS$ defines a $t$-adic valuation in $S$ which extends
$\nu_t$ and coincides with the restriction of $\nu$ to $S$.
Moreover, $D$ is defined as the localization $ST^{-1}$, where
$T=\{1,t,t^2,\dots\}$, and it is shown to have the property
that all of its elements have a unique representation of the
form
$$
 \sum_{i=r}^{\infty}d_it^i,
$$
where the $r$ is an integer and the $d_i$ belong to a system of
coset representatives of $S/tS$ in $S$. It follows that $D$ is complete with
respect to the valuation $\nu$.

\begin{theorem}
  Under the hypothesis of Theorem~\ref{th:valLicht}, if $\Delta$ is infinite-dimensional over
  $Z(\Delta)$, then $D$ contains a free field $\ff{Z(D)}{}{X}$, where $X$ is
  a countable set.
\end{theorem}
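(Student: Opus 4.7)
The plan is to apply Theorem~\ref{th:freefield} directly to $D$. The remarks following Theorem~\ref{th:valLicht} already guarantee that $D$ is complete with respect to $\nu$, so $\widehat{D}=D$, and the valuation $\nu$ is discrete since its value group contains $\nu(t)=1$ and is a subgroup of $\mathbf{Z}$. It therefore suffices to verify the two remaining hypotheses of Theorem~\ref{th:freefield}: that $Z(D)$ is infinite and that $D$ has infinite dimension over $Z(D)$.

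For infiniteness of $Z(D)$, observe that $t\in Z(D)$ and $\nu(t^n)=n$ for all $n\in\mathbf{Z}$, so the powers of $t$ are pairwise distinct and already form an infinite subset of $Z(D)$.

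The main step, and essentially the only nontrivial one, is showing $[D:Z(D)]=\infty$. I would argue by contradiction. Assume $D$ is finite-dimensional over its centre. Then $D$ satisfies a polynomial identity. The valuation ring $S\subseteq D$ inherits that identity, and so does the quotient ring $S/tS$. By Theorem~\ref{th:valLicht}, $S/tS\cong \Delta$, which is a division ring (the field of fractions of the Ore domain $R/tR$). Kaplansky's theorem then implies that $\Delta$ is finite-dimensional over its centre $Z(\Delta)$, contradicting the standing hypothesis on $\Delta$. The hard part of the argument is precisely isolating this PI mechanism; everything else is a straightforward verification that Theorem~\ref{th:freefield} is applicable.

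With both hypotheses established, Theorem~\ref{th:freefield} produces a free field $\ff{Z(D)}{}{X}$ on a countable set $X$ inside $\widehat{D}=D$, which is the desired conclusion.
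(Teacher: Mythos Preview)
Your argument is correct, and it reaches the same conclusion as the paper, but the key step --- showing $[D:Z(D)]=\infty$ --- is handled quite differently. You invoke PI theory: finite dimension over the centre forces $D$ to satisfy a polynomial identity, which descends to the subring $S$ and then to the quotient $S/tS\cong\Delta$, and Kaplansky's theorem closes the loop. The paper instead carries out an explicit lifting argument: given $n+1$ elements of $\Delta$, lift them to $S$, use the assumed bound $[D:Z(D)]=n$ to obtain a $Z(D)$-linear relation among the lifts, normalize the central coefficients by powers of $t$ so that all lie in $S$ with at least one outside $tS$, and reduce modulo $tS$ to obtain a nontrivial $Z(\Delta)$-relation in $\Delta$ (using that $Z(D)\cap S\subseteq Z(S)$ maps into $Z(\Delta)$). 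This even yields the quantitative bound $[\Delta:Z(\Delta)]\le[D:Z(D)]$. Your route is shorter and more conceptual, at the cost of importing Kaplansky's theorem as a black box; the paper's route is entirely self-contained and elementary, requiring only linear algebra and basic valuation manipulations. Both are perfectly legitimate proofs of the statement.
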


\begin{proof}
  We shall prove that under the given hypothesis $D$ is
  infinite-dimensional over $Z(D)$. Suppose that $[D:Z(D)]=n<\infty$. It will
  be shown that this implies that $[\Delta:Z(\Delta)]$ is also finite
  and $[\Delta:Z(\Delta)]\leq n$. Let $\delta_1,\dots,\delta_{n+1}$
  be a sequence of $n+1$ elements in $\Delta=S/tS$ and choose $s_1,
  \dots, s_{n+1}\in S$ such that $\delta_i=\overline{s_i}=s_i+tS$, for all $i=1,\dots, n+1$.
  Since $[D:Z(D)]=n$, the elements $s_1,\dots, s_{n+1}$ are linearly
  dependent over $Z(D)$. We can suppose, then, that
  there exist nonzero elements $z_1,\dots, z_m\in Z(D)$ such that
  $z_1s_1+\dots +z_ms_m=0$, for some $m\leq n+1$. For each $j=1,\dots, m$,
  let $r_j=\nu(z_j)$. Then $\nu(z_jt^{-r_j}) = \nu(z_j)-r_j\nu(t)=\nu(z_j)-r_j=0$.
  So, letting $u_j = z_jt^{-r_j}$, we get $z_j=u_jt^{r_j}$, for
  $j=1,\dots,m$, with $u_j\in S\setminus tS$. Our dependence
  relation now reads
  \begin{equation}\label{eq:adic}
  u_1s_1t^{r_1}+\dots+u_ms_mt^{r_m}=0.
  \end{equation}
  Let $r=\min\{r_j : j=1, \dots, m\}$ and suppose, without lost of generality,
  that $r=r_1$. Then $r_j-r\geq 0$ and hence $t^{r_j-r}\in R\subseteq S$,
  for all $1\leq j\leq m$. Multiplying \eqref{eq:adic} by $t^{-r}$ yields
  the following relation in $S$, $u_1s_1+u_2s_2t^{r_2-r}+\dots+u_ms_mt^{r_m-r}=0$.
  Thus, in the quotient $\Delta = S/tS$, we have
  $$
  \overline{u_1}\delta_1+\overline{u_2t^{r_2-r}}\delta_2+\dots+
  \overline{u_mt^{r_m-r}}\delta_m=0.
  $$
  Now since the $z_j$ and $t$ are central, $u_j\in S\cap Z(D)\subseteq Z(S)$.
  As we have seen above $u_1\in S\setminus tS$. So we have a nontrivial dependence
  relation among the $\delta_j$ with coefficients in $Z(\Delta)$. It follows
  that $[\Delta:Z(\Delta)]\leq n$. We conclude that $D$ is
  infinite-dimensional over $Z(D)$. Moreover, since $t$ is not invertible in $R$,
  the set $T$ of powers of $t$ is infinite and, therefore, the centre $Z(D)$ of $D$
  is infinite. Finally, since $\nu$ is a discrete valuation and
  $D$ is complete with respect to it, $D$ contains a free field $\ff{Z(D)}{}{X}$,
  where $X$ is a countable set, by Theorem \ref{th:freefield}.
\end{proof}

An important special case of Theorem~\ref{th:valLicht} is
provided by universal enveloping algebras of Lie algebras.
We follow \cite[Section~2.2]{aL95}: let $L$ be a Lie algebra over a commutative field
$k$ and let $U(L)$ be its universal enveloping algebra. It is known that $U(L)$ has a
filtration
$$
0\subseteq U_0\subseteq U_1\subseteq\dots\subseteq U_i\subseteq\dots,
$$
where $U_0=k$, $U_i=k+L+L^2+\dots +L^i$ and $L^i$ is the subspace
generated by all products of $i$ elements of $L$. By \cite[Proposition~2.6.1]{pC95},
this filtration defines a valuation $\nu$ on $U(L)$, called the \emph{canonical valuation}.
\cite[Theorem~2]{aL95} then asserts that $U(L)$ can be embedded in a skew field
$D$, which is complete in the topology defined by the canonical valuation. (This result
had been originally obtained by Cohn in \cite{pC61} with a different proof.) Let $D(L)$ be
the subfield of $D$ generated by $U(L)$. Since $D$ is complete, the closure
$\overline{D(L)}$ of the subspace $D(L)$ of the metric space
$D$ is complete, and hence, $\overline{D(L)}$ is the
completion of $D(L)$.

Lichtman also proved in \cite{aL99} that, if $\charac k=0$, then any
skew field that contains $U(L)$ contains a free $k$-algebra of
rank 2 and, therefore, must have infinite dimension over its centre.
So we have the following consequence of Theorem~\ref{th:freefield}.

\begin{theorem}
Let $L$ be a nonabelian Lie algebra over a commutative field of
characteristic zero and let $U(L)$ be its universal enveloping algebra.
In the notation above, both $D$ and $\overline{D(L)}$ contain a free field
$\ff{Z(D(L))}{}{X}$ on a countable set $X$.\hfill\qed
\end{theorem}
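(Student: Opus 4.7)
The plan is to apply Theorem~\ref{th:freefield} directly to the subfield $D(L)$ of $D$, equipped with the restriction of the canonical valuation $\nu$, and then transfer the conclusion to $\overline{D(L)}$ and to $D$.

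First I would verify the three hypotheses of Theorem~\ref{th:freefield} for $D(L)$. Since $k\subseteq U(L)$ and $k$ is central in $U(L)$, each element of $k$ commutes with every generator of the subfield $D(L)$ and therefore lies in $Z(D(L))$; as $\charac k=0$, the centre $Z(D(L))$ contains the infinite field $k$ and is itself infinite. For the dimension hypothesis, the theorem of Lichtman quoted above from \cite{aL99} asserts that any skew field containing $U(L)$ contains a free $k$-algebra of rank~$2$, and therefore has infinite dimension over its centre; applied to $D(L)$ this gives $[D(L):Z(D(L))]=\infty$. Finally, $\nu$ takes values in $\mathbf{Z}$, so $\nu(D(L)^{\times})$ is a subgroup of $\mathbf{Z}$; since $\nu$ is already nontrivial on $U(L)\subseteq D(L)$, this subgroup is infinite cyclic, and $\nu|_{D(L)}$ is a discrete valuation on $D(L)$.

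With these hypotheses in hand, Theorem~\ref{th:freefield} produces a free field $\ff{Z(D(L))}{}{X}$ on a countable set $X$ inside the completion of $D(L)$ with respect to $\nu|_{D(L)}$. Because $D$ is complete under $\nu$ and contains $D(L)$, this completion coincides with the closure $\overline{D(L)}$ of $D(L)$ in $D$, as already noted in the paragraph preceding the statement. Hence $\overline{D(L)}$, and \emph{a fortiori} $D$, contains $\ff{Z(D(L))}{}{X}$.

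The argument is essentially an assembly of tools already in place: completeness of $D$ from Theorem~\ref{th:valLicht}, discreteness of the canonical valuation, and the general criterion of Theorem~\ref{th:freefield}. The only genuinely nontrivial ingredient is the appeal to Lichtman's \cite{aL99} for infinite-dimensionality of $D(L)$ over $Z(D(L))$, which is precisely the role that reference was introduced to play; everything else is routine verification.
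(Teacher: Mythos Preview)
Your proposal is correct and follows exactly the route the paper intends: the theorem is stated with a \qed because the preceding paragraphs have already assembled all the ingredients --- the identification of $\overline{D(L)}$ as the completion of $D(L)$, the discreteness of the canonical valuation, and Lichtman's result giving infinite dimension over the centre --- so that Theorem~\ref{th:freefield} applies directly to $D(L)$. You have simply written out the verification of the hypotheses that the paper leaves implicit.
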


\section*{acknowledgments}
The authors are indebted to A.~Lichtman who communicated to them
the contents of Section~\ref{sec:noff} and kindly permitted them to
be included in the paper.


\end{document}